\definecolor{darkblue}{rgb}{0,0,0.7}
\definecolor{darkred}{rgb}{0.7,0,0}
\tikzset{help lines/.style={dashed, thick}}
\newtheorem{theorem}{Theorem}[section]
\newtheorem{lemma}[theorem]{Lemma}
\newtheorem{corollary}[theorem]{Corollary}
\newtheorem{definition}{Definition}
\newtheorem{proposition}[theorem]{Proposition}
\newtheorem{claim}{Claim}
\def\S{\mathbb{S}}
\def\F{\mathcal{F}}
\def\bd{\mathrm{BD}}
\def\NC{\mathcal{N}\mathcal{C}}
\def\vl{\vec\lambda}
\def\lk{\mathrm{lk}}
\def\del{\mathrm{del}}
\def\ind{\mathrm{Ind}}
\newcommand{\si}{\sigma}
\newcommand{\sm}{\setminus}
\numberwithin{equation}{section}
 \title{Vertex decomposability of complexes associated to forests}
\date{}
\begin{document}

 \maketitle
 \vspace{-2cm}
\begin{center}
{ { {\large  Anurag Singh}\footnote{The author is partially funded by a grant from Infosys Foundation.}}\\
{\footnotesize Chennai Mathematical Institute, India}\\

{\footnotesize e-mail: {\it anuragsingh@cmi.ac.in}}}\\
\end{center}
\medskip
\vspace{-0.25in}
\begin{abstract}

In this article, we discuss the vertex decomposability of three well-studied simplicial complexes associated to forests. In particular, we show that the bounded degree complex of a forest and the complex of directed trees of a multidiforest are vertex decomposable. We then prove that the non-cover complex of a forest is either contractible or homotopy equivalent to a sphere. Finally we provide a complete characterization of forests whose non-cover complexes are vertex decomposable. 

\end{abstract}
\vspace{0.1cm}
\hrule
\noindent{\bf Keywords:} Bounded degree complex, non-cover complex, complex of directed trees, vertex decomposable complex, forests.\\
\noindent 2010 {\it Mathematics Subject Classification}: 05C05, 05E45, 55P10, 55U10


\section{{Introduction}}
The concept of a (pure) vertex decomposable simplicial complex was introduced by Provan and Billera \cite{PB80} in order to study the diameter problems. Later, in \cite{BW97}, Bj{\"o}rner and Wachs extended this notion to non-pure  simplicial complexes. Defined in a recursive way (see \Cref{def:vertexdecomposable}), the notion of vertex decomposability enjoys a very rich literature. In \cite{wachs99}, Wachs showed that a vertex decomposable simplicial complex is shellable (see \Cref{def:shellability}) and hence sequentially Cohen-Macaulay\footnote{See \cite[Definition 3.22]{Jon08} for the definition of sequentially Cohen-Macaulay complex.} \cite[Theorem 3.33]{Jon08}. In this direction, we have the following strict implications:
\begin{equation}
    \text{vertex decomposable} \Longrightarrow \text{shellable}
\Longrightarrow \text{sequentially Cohen-Macaulay}
\end{equation}

Let $K$ be a simplicial complex on vertex set $\{x_1,\dots,x_n\}$ and let $R=\mathbb{F}[x_1,\dots,x_n]$ denotes the polynomial ring on $n$ variables over some field $\mathbb{F}$. The monomial ideal associated to $K$, denoted $I_{K}$, is the ideal in $R$ generated by all monomials $x_{i_1}\dots x_{i_s}$ whenever $\{x_{i_1},\dots ,x_{i_s}\}\notin K$. The \emph{Stanley-Reisner ring} of $K$ is the quotient ring $\mathbb{F}[K]:= R/I_{K}$. It is known (see for example \cite{BWW09}) that a complex $K$ is sequentially Cohen-Macaulay if and only if $\mathbb{F}[K]$ is sequentially Cohen-Macaulay in the algebraic sense (see \cite[Definition III 2.9]{stan07} for the definition of the later). This connection between algebra and topology motivated researchers in the last decade to explore the vertex decomposability of a complex $K$ in order to study the algebraic peoperties of $\mathbb{F}[K]$ (see for instance \cite{ED09, wood09, Mor19, CR17}).


Note that a vertex decomposable simplicial complex is homotopy equivalent to a wedge of spheres since it is shellable \cite[Theorem 4.1]{BW96}. However, the converse is not true in general (for example the disjoint union of two simplices of dimension $1$ is homotopy equivalent to a $0$-sphere but not shellable and hence not vertex decomposable). Also the boundary complexes of simplicial polytopes are shellable \cite[Chapter 8]{ziegler12} but many of them are not vertex decomposable \cite[Section 6]{klee87}. Recently, in \cite{CDGO20}, Coleman et al. showed that a vertex decomposable complex is shelling completable (see \cite[Definition 1.2]{CDGO20}) and hence satisfy the Simon's Conjecture \cite{simon94}.


%

 Inspired by the importance of vertex decomposable complexes, in this article we study the vertex decomposability of various simplicial complexes associated to forests. The article is organized as follows. In \Cref{sec:prelim} we recall all the  important  definitions  and relevant tools. In \Cref{sec:boundeddegree}, we prove that the bounded degree complex (a generalization of matching complexes) of a forest is vertex decomposable (cf. \Cref{thm:mainbdcomplex}). \Cref{sec:noncover} is devoted towards the study of non-cover complexes of graphs. Here, we show that the non-cover complex of a forest is either contractible or homotopy equivalent to a sphere. We also give a complete list of forests whose non-cover complexes are vertex decomposable (cf. \Cref{theorem:mainnoncover2}). In the final section we show that the complex of directed trees of a multidiforest is vertex decomposable (cf. \Cref{thm:maindirectedtree}).

\section{Preliminaries}\label{sec:prelim}
An {\itshape(undirected) graph} is an ordered pair $G=(V(G),E(G))$ where $V(G)$ is called the set of vertices and $E(G) \subseteq V(G) \times V(G)$, the set of (unordered) edges of $G$. The graph $G$ is called \emph{simple} if $(v,v)\notin E$ for any $v\in V(G)$. The vertices $v_1, v_2 \in V$ are said to be adjacent, if $(v_1,v_2)\in E$. A vertex $v$ is said to be {\itshape adjacent} to an edge $e$ (and vice versa), if $v$ is an end point of $e$, {\itshape i.e.}, $e=(v, w)$. Two edges $e,f \in E(G)$ are said to be adjacent if both are adjacent to a common vertex. The number of vertices adjacent to a vertex $v$ in $G$ is called the {\itshape degree} of $v$ in $G$, denoted deg$_G(v)$. If deg$_G(v)=1$, then $v$ is called a {\itshape leaf vertex} of $G$ and the edge adjacent to $v$ is called a \emph{leaf edge} of $G$.  Two graphs $G$ and $H$ are called {\itshape isomorphic}, denoted $G\cong H$, if there exists a bijection, $f : V(G) \to V(H)$ such that $(v,w) \in E(G)$ if and only if $(f(v),f(w)) \in E(H).$ 

A graph $H$ with $V(H) \subseteq V(G)$ and $E(H) \subseteq E(G)$ is called a {\it subgraph} of the graph $G$. For a nonempty subset $H$ of $E(G)$, the induced subgraph $G[H]$, is the subgraph of $G$ with edges $E(G[H]) = H$ and $V(G[H]) = \{a \in  V(G)  :  a \text{ is adjacent $e$ for some }e \in H\}$. For a nonempty subset $U$ of $V(G)$, the induced subgraph $G[U]$, is the subgraph of $G$ with vertices $V(G[U]) = U$ and $E(G[U]) = \{(a, b) \in E(G) \ | \ a, b \in U\}$.  For a subset $S \subseteq E(G)$, the induced subgraph $G[S]$, is the subgraph of $G$ with vertices $V(G[S])=\{u \in V(G) : u \text{ is adjacent } e \text{ for some } e \in S\}$ and $E(G[S]) = S$. In this article, $G[V(G)\setminus A]$ will be denoted by $G-A$ for $A\subsetneq V(G)$. For $U\subsetneq V(G)$ and $S\subseteq E(G)$, the graphs $G[V(G)\setminus U]$ and $G[E(G)\setminus S]$ will be denoted by $G-U$ and $G-S$ respectively.

A {\itshape tree} is a graph in which any two vertices are connected by exactly one path and a \emph{forest} is a family of disjoint trees. Let $T$ be a tree. Vertex $v\in V(T)$ is called an {\itshape internal vertex} if deg$_T(v)>1$. An internal vertex is called a {\itshape corner vertex}, if it is adjacent to at most one internal vertex. For example: in \Cref{fig:figure of a tree}, $v_2$ is an internal vertex but not a corner vertex, and $v_1,v_3$ are corner vertices.

\begin{figure}[H]
		\centering
		\begin{tikzpicture}
 [scale=0.4, vertices/.style={draw, fill=black, circle, inner sep=1.0pt}]
        \node[vertices, label=below:{$v_1$}] (v1) at (0,0)  {};
		\node[vertices, label=below:{$v_2$}] (v2) at (7,0)  {};
		\node[vertices, label=below:{$v_3$}] (v3) at (14,0)  {};
		\node[vertices, label=above:{}] (w12) at (-3,3)  {};
		\node[vertices, label=above:{}] (w32) at (11,3)  {};
		\node[vertices, label=above:{}] (w11) at (0,4)  {};
		\node[vertices, label=above:{}] (w31) at (14,4)  {};
		
\path (v1) edge node[pos=0.5,below] {} (v2);
\path (v1) edge node[pos=0.5,left] {} (w11);
\path (v2) edge node[pos=0.5,left] {} (v3);
\path (v3) edge node[pos=0.5,left] {} (w31);
\path (v1) edge node[pos=0.5,left] {} (w12);
\path (v3) edge node[pos=0.5,left] {} (w32);
\end{tikzpicture}\caption{}\label{fig:figure of a tree}
\end{figure}


\begin{proposition}\label{prop:basic properties of trees}
 Every tree with more than one edge has a corner vertex. 
\end{proposition}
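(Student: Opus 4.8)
The plan is to induct on the number of internal vertices of the tree $T$, or equivalently to exploit the tree structure directly by looking at a longest path. Since $T$ has more than one edge, it has at least two vertices of degree $\geq 2$ after we account for leaves, so the set $I$ of internal vertices is nonempty. If $|I| = 1$, then the unique internal vertex is trivially a corner vertex (it is adjacent to zero internal vertices), so we may assume $|I| \geq 2$.

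The cleanest route is to consider a path $P = u_0, u_1, \dots, u_k$ in $T$ of maximum length. Because $T$ is a tree with more than one edge, $k \geq 2$, and the endpoints $u_0, u_k$ are forced to be leaves of $T$ (otherwise the path could be extended, contradicting maximality). I would then argue that $u_1$ is a corner vertex: it is internal since it has degree at least $2$ (it is adjacent to $u_0$ and $u_2$), and I claim its only possible internal neighbor is $u_2$. Indeed, any neighbor $w$ of $u_1$ other than $u_2$ must be a leaf — if $w$ had degree $\geq 2$ it would have a neighbor $w' \neq u_1$, and then $w', w, u_1, u_2, \dots, u_k$ would be a longer path (here one uses that $T$ is acyclic so $w' \notin \{u_1, \dots, u_k\}$), contradicting maximality of $P$. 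Hence every neighbor of $u_1$ except $u_2$ is a leaf, so $u_1$ is adjacent to at most one internal vertex, i.e., $u_1$ is a corner vertex.

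An alternative, purely inductive approach: remove all leaf vertices of $T$ simultaneously to obtain a smaller tree $T'$ on the internal vertices of $T$. One checks $T'$ is still a tree (it is connected and acyclic as a subgraph of $T$); if $T'$ has at most one edge the corner vertex is immediate, and otherwise a leaf of $T'$ pulls back to a corner vertex of $T$, since a leaf of $T'$ has exactly one neighbor among the internal vertices of $T$.

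I expect the main obstacle to be the bookkeeping around degenerate cases rather than any deep idea: one must be careful that a ``corner vertex'' is required to be internal, so the argument must produce a vertex of degree $\geq 2$, and one must handle the small cases ($T$ a path, $T$ a star) where the notion of longest path or the reduced tree $T'$ behaves trivially. The acyclicity of $T$ is used crucially to guarantee that extending the path genuinely produces a longer path (no vertex is revisited), and this is the one place where being a tree, rather than merely a connected graph, is essential.
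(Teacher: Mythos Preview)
Your longest-path argument is correct and takes a genuinely different route from the paper's. The paper instead runs a BFS-style labelling from a fixed leaf: label the leaf $1$, its neighbours $2$, the new neighbours of those $3$, and so on; if $\ell$ is the largest label reached (necessarily $\ell\geq 3$), then a vertex labelled $\ell-1$ is declared a corner vertex. Both proofs are extremal arguments---yours picks the penultimate vertex on a longest path, the paper picks a vertex at almost-maximum distance from a fixed leaf---but your version is more direct and sidesteps a small imprecision in the paper's wording (the paper asserts that \emph{any} vertex labelled $\ell-1$ is a corner vertex, which can fail when such a vertex is itself a leaf; one must choose an internal one, namely the vertex labelled $\ell-1$ on the path to a vertex labelled $\ell$). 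Your alternative leaf-stripping induction is also standard and sound.

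One minor quibble: your opening sentence that $T$ ``has at least two vertices of degree $\geq 2$'' is false for stars $K_{1,n}$ with $n\geq 2$. Since you immediately handle the case $|I|=1$ separately this does not damage the argument, but you should drop or rephrase that clause.
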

\begin{proof}
We prove this by labelling all the vertices of $T$. Start with a leaf $v$ and label it $1$. Next we give label $2$ to all the vertices adjacent to $v$. We then give label $3$ to all those vertices which are adjacent to at least one vertex labelled $2$ and is not already labelled. We continue labelling vertices of $T$ with this argument. Since $T$ is finite and has no cycle, this labelling stops at certain stage say at $\ell$. Since $|V(T)|\geq 3$, $\ell \geq 3$. Therefore, observe that, any vertex labelled $\ell-1$ will be a corner vertex. 
\end{proof}

The following graphs are a special class of trees.

\begin{definition}\label{def:caterpillar graph}
\normalfont A {\itshape caterpillar graph} is a tree in which every vertex is on a central path or only one edge away from the path (see \Cref{fig:caterpillar graph} for examples). 
\end{definition}

A caterpillar graph of length $n$ is denoted by $G_n(m_1,\dots,m_n)$, where $n$ represents the number of vertices of the central path and $m_i$ denote the number of leaves adjacent to $i^{\mathrm{th}}$ vertex of the central path.

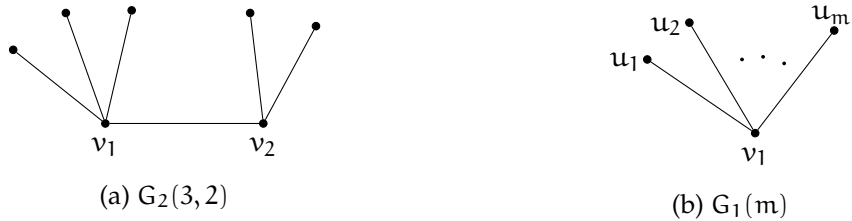
\begin{figure}[H]
	\begin{subfigure}[]{0.55 \textwidth}
		\centering
		\begin{tikzpicture}
 [scale=0.35, vertices/.style={draw, fill=black, circle, inner sep=1.0pt}]
        \node[vertices, label=below:{$v_1$}] (v1) at (0,0)  {};
		\node[vertices, label=below:{$v_2$}] (v2) at (6,0)  {};
		\node[vertices] (l11) at (-3.5,2.8)  {};
		\node[vertices] (l12) at (-1.5,4.2)  {};
		\node[vertices] (l13) at (1,4.3)  {};
		\node[vertices] (l21) at (5.5,4.2)  {};
		\node[vertices] (l22) at (8,3.7)  {};
		
\foreach \to/\from in {v1/v2}
\path (v1) edge node[pos=0.5,below] {} (v2);
\path (v1) edge node[pos=0.65,below] {} (l11);
\path (v1) edge node[pos=0.5,left] {} (l12);
\path (v1) edge node[pos=0.5,left] {} (l13);
\path (v2) edge node[pos=0.5,left] {} (l21);
\path (v2) edge node[pos=0.5,left] {} (l22);
\end{tikzpicture}\caption{$G_2(3,2)$}\label{fig:G243}
	\end{subfigure}
	\begin{subfigure}[]{0.35 \textwidth}
		\centering
	\begin{tikzpicture}
 [scale=0.35, vertices/.style={draw, fill=black, circle, inner sep=1.0pt}]
        \node[vertices, label=below:{$v_1$}] (v1) at (0,0)  {};
		\node[vertices] (l11) at (-4.1,2.8)  {};
		\node[vertices] (l12) at (-2.5,4.2)  {};
		\node[vertices] (l1m) at (3.0,3.9)  {};
		\node[vertices,inner sep=0.3pt] (d1) at (-0.5,2.8)  {};
		\node[vertices,inner sep=0.3pt] (d2) at (0.3,2.9)  {};
		\node[vertices,inner sep=0.3pt] (d3) at (1.1,2.65)  {};
		
\foreach \to/\from in {v1/l11}
\path (v1) edge node[pos=1,left] {$u_1$} (l11);
\path (v1) edge node[pos=1,left] {$u_2$} (l12);
\path (v1) edge node[pos=1,above] {$u_m$} (l1m);
\end{tikzpicture}\caption{$G_1(m)$}\label{fig:G1m}
	\end{subfigure}
	\caption{Caterpillar graphs} \label{fig:caterpillar graph}
\end{figure}

An {\itshape (abstract) simplicial complex} $K$ is a collection of finite sets such that if $\sigma \in K$ and $\tau \subseteq \sigma$, then $\tau \in K$. The elements  of $K$ are called {\itshape simplices} of $K$.  If $\sigma \in K$ and $|\sigma |=k+1$, then $\sigma$ is said to be {\it $k$-dimensional}. The \emph{dimension} of $K$, denoted dim$(K)$, is the maximum of the dimensions of its simplices. Further, if $\si \in K$ and $\tau \subseteq \si$ then $\tau$ is called a {\itshape face} of $\si$ and if $\tau \neq \si$ then $\tau$ is called a {\itshape proper face} of $\si$. The set of $0$-dimensional simplices of $K$ is denoted by $V(K)$, and its elements are called {\it vertices} of $K$. Maximal simplices of $K$ are called \emph{facets} and $K$ is called \emph{pure} if all its facets have the same dimension. 

A {\it subcomplex} of a simplicial complex $K$ is a simplicial complex whose simplices are contained in $K$. For $\si \in K$, the boundary of simplex $\si$, denoted $\partial(\si)$ is collection of all proper faces of $\si$. If $\si$ is a vertex, then $\partial(\si)=\emptyset$. For $k\geq 0$, the {\itshape $k$-skeleton} of a simplicial complex $K$ is the collection of all those simplices of $K$ whose dimension is at most $k$.

For a simplex $\si \in K$ , define
\begin{equation*}
    \begin{split}
        \mathrm{lk}(\sigma,K) & := \{\tau \in K : \sigma \cap \tau = \emptyset,~ \sigma \cup \tau \in K\}, \\
        \mathrm{del}(\sigma,K) & := \{\tau \in K : \sigma \nsubseteq \tau\}.
    \end{split}
\end{equation*}
The simplicial complexes $\mathrm{lk}(\sigma,K)$ and $\del(\sigma,K)$ are called \emph{link} of $\si$ in K and \emph{(face) deletion} of $\si$ in $K$ respectively. The join of two simplicial complexes $K_1$ and $K_2$ , denoted as $K_1 \ast K_2$, is a simplicial complex whose simplices are disjoint union of simplices of $K_1$ and of $K_2$. Let $\Delta^S$ denotes a $(|S|-1)$-dimensional simplex with vertex set $S$ and $\partial(\Delta^S)$ denotes the boundary of simplex $\Delta^S$, {\itshape i.e.}, $(|S|-2)-$skeleton of $\Delta^S$. Then, the \emph{cone} on $K$ with apex $a$, denoted as $C_a(K)$, is defined as
 $$ C_a (K ) := K \ast \Delta^{\{a\}}.$$
 
 For $a , b \notin V (K )$, the suspension of $K$, denoted as $\Sigma(K )$, is defined as
$$\Sigma (K ) := K \ast \partial(\Delta^{\{a,b\}}).$$

Observe that, for any vertex $v \in V (K )$, we have
$$ K = C_v (\lk(v,K)) \cup \del(v,K) \text{ and } C_v (\lk(v,K)) \cap \del(v,K) = \lk(v,K).$$
Clearly, $C_v (\lk(v,K))$ is contractible. Therefore, if $\lk(v,K)$ is contractible in $\del(v,K)$ then from \cite[Remark 2]{SSA20} we get the following homotopy equivalence 

\begin{equation}\label{eq:linkdel}
    K \simeq \del(v,K)\bigvee \Sigma (\lk(v,K))
\end{equation}
 here $\bigvee$ denotes the wedge of topological spaces. In this article, empty wedge will mean that the space is contractible.
 
\begin{definition}\label{def:vertexdecomposable}
\normalfont A simplicial complex $K$ is called \emph{vertex decomposable} if $K$ is
a simplex, or $K$ contains a vertex $v$ such that
\begin{enumerate}
    \item[(i).] both $\lk(v,K)$ and $\del(v,K)$ are vertex decomposable, and
    \item[(ii).] any facet of $\del(v,K)$ is a facet of $K$.
\end{enumerate}
A vertex $v$ which satisfies condition (ii) is called a \emph{shedding vertex} of $K$. We call vertex $v$ a \emph{decomposing vertex} of $K$ if $v$ satisfies both the conditions (i) and (ii).
\end{definition}

The following can be easily inferred from \Cref{def:vertexdecomposable}.
\begin{proposition}\label{prop:join of vd}
For two simplicial complexes $K_1$ and $K_2$, the join $K_1\ast K_2$ is vertex decomposable if and only if both $K_1$ and $K_2$ are vertex decomposable.
\end{proposition}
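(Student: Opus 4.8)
The plan is to prove both directions by induction on the total number of vertices of $K_1 \ast K_2$, using the standard identities relating links and deletions of a vertex in a join to the link and deletion in the corresponding factor. The two facts I would first record are: for a vertex $v \in V(K_1)$ (the case $v \in V(K_2)$ being symmetric), $\lk(v, K_1 \ast K_2) = \lk(v, K_1) \ast K_2$ and $\del(v, K_1 \ast K_2) = \del(v, K_1) \ast K_2$. Both follow immediately from the definition of the join, since a face $\tau$ of $K_1 \ast K_2$ splits uniquely as $\tau_1 \sqcup \tau_2$ with $\tau_i \in K_i$, and the conditions defining link and deletion only constrain the $K_1$-part. I would also note the base case: if $K_1$ and $K_2$ are both simplices then $K_1 \ast K_2$ is a simplex, hence vertex decomposable, and conversely any face of a vertex decomposable complex spans... more carefully, a simplex has no proper structure, so the base cases are handled by inspection (if either factor is a single vertex or empty, the statement is direct).

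For the forward direction, suppose $K_1 \ast K_2$ is vertex decomposable. If it is a simplex, then each $K_i$ is a simplex (a face of a simplex, determined by the fact that the join of the two vertex sets is a face), hence vertex decomposable. Otherwise there is a decomposing vertex $v$; without loss of generality $v \in V(K_1)$. Then $\lk(v, K_1 \ast K_2) = \lk(v,K_1) \ast K_2$ and $\del(v, K_1 \ast K_2) = \del(v,K_1) \ast K_2$ are both vertex decomposable, and by the inductive hypothesis this forces $\lk(v, K_1)$, $\del(v,K_1)$, and $K_2$ to be vertex decomposable. It remains to check the shedding condition for $v$ in $K_1$: a facet $F$ of $\del(v,K_1)$ gives a facet $F \sqcup G$ of $\del(v,K_1)\ast K_2$ for every facet $G$ of $K_2$ (here one uses that $K_2$ is nonempty, which holds since otherwise the join is just $K_1$), and since $v$ is a shedding vertex of the join, $F \sqcup G$ is a facet of $K_1 \ast K_2$, whence $F$ is a facet of $K_1$. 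Thus $v$ is a decomposing vertex of $K_1$, so $K_1$ is vertex decomposable; and $K_2$ we already obtained.

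For the reverse direction, suppose both $K_1$ and $K_2$ are vertex decomposable; I induct again on the number of vertices. If both are simplices, the join is a simplex. Otherwise, say $K_1$ is not a simplex and has a decomposing vertex $v$. I claim $v$ is a decomposing vertex of $K_1 \ast K_2$: the link and deletion of $v$ in the join are $\lk(v,K_1)\ast K_2$ and $\del(v,K_1)\ast K_2$, both vertex decomposable by the inductive hypothesis (each is a join of a vertex decomposable complex with the vertex decomposable $K_2$, on fewer vertices), and the shedding condition transfers: a facet of $\del(v,K_1)\ast K_2$ has the form $F \sqcup G$ with $F$ a facet of $\del(v,K_1)$ and $G$ a facet of $K_2$, so $F$ is a facet of $K_1$ by the shedding property of $v$ in $K_1$, and therefore $F \sqcup G$ is a facet of $K_1 \ast K_2$. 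Hence $K_1 \ast K_2$ is vertex decomposable.

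The only real subtlety — and the step I would be most careful about — is the bookkeeping around degenerate cases: when one of the factors is empty, or is the void complex versus the complex $\{\emptyset\}$, or consists of a single vertex, so that ``$K_1 \ast K_2$ is a simplex'' must be reconciled with ``$K_i$ is a simplex.'' These are not deep but must be stated cleanly so the induction's base case and the extraction of ``$K_2$ is vertex decomposable'' from ``$\del(v,K_1)\ast K_2$ is vertex decomposable'' are both airtight; everything else is a routine unwinding of the join identities.
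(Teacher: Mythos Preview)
Your argument is correct, and in fact it supplies considerably more detail than the paper does: the paper does not prove this proposition at all, stating only that it ``can be easily inferred from'' the definition of vertex decomposability. Your induction on $|V(K_1)\sqcup V(K_2)|$ together with the join identities $\lk(v,K_1\ast K_2)=\lk(v,K_1)\ast K_2$ and $\del(v,K_1\ast K_2)=\del(v,K_1)\ast K_2$ is exactly the standard way to make that inference precise, and the shedding-vertex transfer in both directions is handled correctly. Your closing remark about the degenerate cases (void versus $\{\emptyset\}$, one factor a single vertex) is apt and is the only place any real care is needed; once those conventions are fixed, the argument goes through without change.
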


\begin{definition}\label{def:shellability}
\normalfont A simplicial complex $K$ is called \emph{shellable} if its facets can be arranged in linear order $F_1, F_2, \dots ,F_t$ in such a way that the subcomplex $\big(\bigcup\limits_{1\leq j <r}\Delta^{F_j}\big) \cap \Delta^{F_r}$ is pure and
$(\text{dim}(\Delta^{F_k}) -1)$-dimensional for all $k = 2,\dots,t$. Such an ordering of facets is called a
shelling order of $K$.
\end{definition}

\begin{lemma}[ {\cite[Lemma 3.3]{bar13}} ]\label{lemma:glueing}
Let $K_1$ and $K_2$ be two contractible subcomplexes of a simplicial complex $K$ such that $K = K_1 \cup K_2$. Then $K \simeq \Sigma(K 1 \cap K 2 )$, where $\Sigma(X)$
denotes the suspension of space $X$.
\end{lemma}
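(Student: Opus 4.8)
The plan is to recognize $K$ as a homotopy pushout and then exploit the contractibility of $K_1$ and $K_2$ to identify that pushout with a suspension. Write $A := K_1\cap K_2$. Since $K_1$ and $K_2$ are subcomplexes of $K$ with $K = K_1\cup K_2$, each inclusion $A\hookrightarrow K_i$ is the inclusion of a subcomplex and hence a cofibration, and $K$ is literally the pushout of the diagram $K_1\hookleftarrow A\hookrightarrow K_2$. Because at least one leg of this span is a cofibration, this ordinary pushout is also a \emph{homotopy} pushout.

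First I would replace $K$ by the homotopy-equivalent double mapping cylinder
$$ D \;:=\; K_1\;\cup_{A\times\{0\}}\;\bigl(A\times[0,1]\bigr)\;\cup_{A\times\{1\}}\;K_2, $$
the standard model of the homotopy pushout of $K_1\hookleftarrow A\hookrightarrow K_2$; the homotopy equivalence $D\simeq K$ is the usual gluing fact for NDR/cofibered pairs. Next I would collapse $K_1$: it is contractible and $K_1\hookrightarrow D$ is a cofibration, so the quotient map $D\to D/K_1$ is a homotopy equivalence. One checks directly that $D/K_1$ is the mapping cone of $A\hookrightarrow K_2$, i.e. $C_a(A)\cup_A K_2$, the cone $C_a(A)$ being attached to $K_2$ along the copy $A\subseteq K_2$ which is its base. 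Finally collapse $K_2$: again $K_2$ is contractible and its inclusion into $D/K_1$ is a cofibration, so $D/K_1\simeq (D/K_1)/K_2$, and this last quotient simply identifies the whole base $A$ of the cone $C_a(A)$ to a point. But a cone with its base collapsed to a point is exactly the (unreduced) suspension: $C_a(A)/A=\Sigma(A)$. Chaining the equivalences yields $K\simeq D\simeq D/K_1\simeq \Sigma(A)=\Sigma(K_1\cap K_2)$.

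The only point requiring care is the bookkeeping that every subspace being collapsed is collapsed along a cofibration, so that ``quotient by a contractible subcomplex'' genuinely is a homotopy equivalence; for pairs of simplicial complexes this is automatic, so I do not expect a real obstacle here. If one prefers to bypass mapping cylinders entirely, the same argument can be phrased as: a homotopy pushout is invariant under replacing the corner spaces by homotopy equivalent ones, so the homotopy pushout of $K_1\hookleftarrow A\hookrightarrow K_2$ agrees with that of $\ast\leftarrow A\to\ast$, which is by definition $\Sigma(A)$. The degenerate case $A=\emptyset$ (where $K=K_1\sqcup K_2$ is a disjoint union of two contractible pieces, hence $\simeq S^0=\Sigma(\emptyset)$) is handled correctly by the mapping-cylinder formulation and can be dispatched in a line.
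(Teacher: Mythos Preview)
The paper does not supply its own proof of this lemma: it is quoted verbatim from \cite[Lemma 3.3]{bar13} and used as a black box. So there is nothing in the paper to compare your argument against.

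Your argument is correct. The key points are all in order: the inclusions $A\hookrightarrow K_i$ of subcomplexes are cofibrations, so the strict pushout $K=K_1\cup_A K_2$ coincides with the homotopy pushout; collapsing a contractible cofibered subspace is a homotopy equivalence; and the identification $CA/A\simeq\Sigma A$ is immediate from the definitions. The alternate one-line formulation you give at the end --- homotopy pushouts are invariant under weak equivalences on the corners, and the homotopy pushout of $\ast\leftarrow A\to\ast$ is $\Sigma A$ --- is in fact the cleanest way to say it and is exactly how most topologists would phrase this fact. Your handling of the degenerate case $A=\emptyset$ is also fine. There are no gaps.
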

 
For a space $X$, let ${\Sigma}^r(X)$ denote its $r$-fold suspension, where $r\geq 1$ is a natural number. Recall that, there is a homotopy equivalence
 \begin{equation}\label{remark:join of space with spheres is suspension}  
 \begin{split}
     \S^{r-1}  \ast X & \simeq \Sigma^{r}(X), \text{ and }\\
     \Sigma^{r}(\S^{t}) & \simeq \S^{t+r+1}.
 \end{split}
\end{equation}


\section{The bounded degree complex}\label{sec:boundeddegree}

Let $G$ be a graph and $\vl: V(G)\longrightarrow \mathbb{Z}_{\geq 0}$ be a labelling of the vertices of $G$ with non-negative integers. The \emph{bounded degree complex}, denoted $\text{BD}^{\vec{\lambda}}(G)$, is a simplicial complex whose vertices are the edges of $G$ and faces are subsets $\sigma\subseteq E(G)$ such that for each $v\in V(G)$, the degree of vertex $v$ in the induced subgraph $G[H]$ is at most $\vl(v)$ (see \Cref{fig:example of BD} for example). When $\vl(v) =k$ for all $v\in V(G)$, the bounded degree complex $\text{BD}^{\vl}(G)$ is called the {\itshape $k$-matching complex} of graph $G$ and denoted by $M_k(G)$. 


\begin{figure}[H]
	\begin{subfigure}[]{0.5 \textwidth}
		\centering
		\vspace{0.5cm}
		\begin{tikzpicture}
 [scale=0.5, vertices/.style={draw, fill=black, circle, inner sep=1.0pt}]
        \node[vertices, label=below:{}] (v1) at (0,0)  {};
		\node[vertices, label=below:{}] (v2) at (6,0)  {};
		\node[vertices, label=above:{}] (l11) at (-2.5,3.5)  {};
		\node[vertices, label=above:{}] (l12) at (0.7,3.8)  {};
		\node[vertices, label=above:{}] (l21) at (5,3.7)  {};
		\node[vertices, label=above:{}] (l22) at (7.5,3.9)  {};
		\node[vertices, label=above:{}] (v3) at (-4,0)  {};
		
\foreach \to/\from in {v1/v2}
\path (v1) edge node[pos=0.5,below] {$e_6$} (v2);
\path (v1) edge node[pos=0.45,left] {$e_2$} (l11);
\path (v1) edge node[pos=0.5,right] {$e_3$} (l12);
\path (v2) edge node[pos=0.5,left] {$e_4$} (l21);
\path (v2) edge node[pos=0.5,right] {$e_5$} (l22);
\path (v3) edge node[pos=0.5,left] {$e_1$} (l11);
\end{tikzpicture}\caption{$G$}
	\end{subfigure}
	\begin{subfigure}[]{0.5 \textwidth}
		\centering
	\begin{tikzpicture}
 [scale=0.28, vertices/.style={draw, fill=black, circle, inner sep=0.5pt}]
\node[vertices, label=left:{$e_1$}] (a) at (-4,0) {};
\node[vertices, label=right:{$e_3$}] (b) at (4,0) {};
\node[vertices, label=above:{$e_4$}] (c) at (0,5) {};
\node[vertices, label=below:{$e_5$}] (d) at (0,-5) {};
\node[vertices, label=left:{$e_6$}] (e) at (-6,4.5) {};
\node[vertices, label=right:{$e_2$}] (f) at (12,0) {};

\foreach \to/\from in {a/b,a/c,a/d,b/c,b/d,a/e,c/f,d/f}
\draw [-] (\to)--(\from);
\filldraw[fill=gray!60, draw=black] (-4,0)--(4,0)--(0,-5)--cycle;
\filldraw[fill=yellow!50, draw=black] (-4,0)--(4,0)--(0,5)--cycle;
\end{tikzpicture}\caption{$\mathrm{BD}^{(1,1,1,1,1,1,1)}(G)=M_1(G)$}\label{fig:BD of G}
	\end{subfigure}
	\caption{} \label{fig:example of BD}
\end{figure}

Bounded degree complexes were introduced by Reiner and Roberts in \cite{RR00} and further studied by Jonsson in \cite{Jon08}. For more on these complexes, interested reader is referred to \cite{Jon08,anurag2, Wach03}. 

 In \cite[Theorem 4.13]{MT08}, Marietti and Testa proved that the $1$- matching complexes of forests are homotopy equivalent to a wedge of spheres.  In \cite{Vega19}, Vega studied the homotopy type of $2$-matching complexes of caterpillar graphs (see \Cref{def:caterpillar graph}) and conjectured \cite[Conjecture 7.3]{Vega19} that the $k$-matching complex of caterpillar graphs are either contractible or homotopy equivalent to a wedge of spheres. The author, in \cite[Theorem 1.2]{anurag1}, proved this conjecture by showing that the bounded degree complexes of forests are homotopy equivalent to wedge of spheres. Recently, Matsushita \cite{mat20} showed that these complexes are shellable. Here, we strengthen his result by showing that the bounded degree complexes of forests are in fact vertex decomposable.

We first introduce a few notations. For $\vl:V(G)\longrightarrow \mathbb{Z}_{\geq 0}$ a labelling of $G$ and $v\in V(G)$, the induced labelling $\vl_{G,v}$ of graph $G-v$ is given by 
\begin{equation}\label{eq:labellingforv}
      \vl_{G,v}(u) = \vl(u)~~ \forall ~ u \in V(G-v)
  \end{equation}
 and for $e = (v,w)\in E(G)$, the labelling $\vl_{G,e}$ of graph $G-e$ is given by
 \begin{equation}\label{eq:labellingfore}
      \vl_{G,e}(u) = 
      \begin{cases}
        \vl(u), & \text{~if~} u \notin \{v,w\}, \\
        \vl(u)-1, & \text{~if~} u \in \{v,w\}.
      \end{cases}
  \end{equation}
  
\begin{theorem}\label{thm:mainbdcomplex}
Let $\F$ be a forest and $\vec{\lambda}:V(\F)\longrightarrow \mathbb{Z}_{\geq 0}$ be a labelling of its vertices. Then, $\bd^{\vl}(\F)$ is vertex decomposable.
\end{theorem}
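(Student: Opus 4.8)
## Proof Strategy

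The plan is to induct on the number of edges of $\F$, using the recursive structure of vertex decomposability together with the link/deletion formulas for bounded degree complexes. If $\F$ has no edges, then $\bd^{\vl}(\F)$ is the empty complex (a $(-1)$-simplex) or a point, which is trivially a simplex, so the base case holds. If $\F$ has exactly one edge $e$, then $\bd^{\vl}(\F)$ is either $\{\emptyset\}$ or the single vertex $\{e\}$ depending on whether $\vl$ permits $e$; both are simplices. So assume $\F$ has at least two edges.

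The key observation is that for a vertex $e = (v,w) \in V(\bd^{\vl}(\F)) = E(\F)$, one has the identifications
\begin{equation}\label{eq:bd-link-del}
\del(e, \bd^{\vl}(\F)) = \bd^{\vl_{\F,e}^{+}}(\F - e) \quad\text{(suitably interpreted)}, \qquad \lk(e, \bd^{\vl}(\F)) = \bd^{\vl_{\F,e}}(\F - e),
\end{equation}
where $\vl_{\F,e}$ is the decremented labelling from \eqref{eq:labellingfore}: a face of $\lk(e,\bd^{\vl}(\F))$ is a set $\sigma \subseteq E(\F) \sm \{e\}$ with $\sigma \cup \{e\}$ an allowed configuration, i.e. the degree of every vertex in $\F[\sigma]$ is at most $\vl$, with the degrees of $v$ and $w$ each reduced by one. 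Thus the link is literally a bounded degree complex of the smaller forest $\F - e$, hence vertex decomposable by the inductive hypothesis. The deletion $\del(e,\bd^{\vl}(\F))$ is the set of allowed configurations not using $e$; this is again a bounded degree complex on $\F - e$ (with the original labelling restricted), so it too is vertex decomposable by induction. The remaining and crucial point is the shedding condition: I must exhibit a vertex $e$ such that every facet of $\del(e,\bd^{\vl}(\F))$ remains a facet of $\bd^{\vl}(\F)$.

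The main obstacle is the choice of the shedding edge. The natural candidate, dictated by \Cref{prop:basic properties of trees}, is to take a leaf edge $e = (v,w)$ where $w$ is a leaf vertex, attached at a corner vertex $v$ of some component tree of $\F$ with at least two edges. For such $e$, one argues: if $\sigma$ is a facet of $\del(e,\bd^{\vl}(\F))$ but not a facet of $\bd^{\vl}(\F)$, then $\sigma \cup \{e\}$ must be an allowed configuration, which means the degree of $v$ in $\F[\sigma]$ is strictly less than $\vl(v)$ (and $\vl(w)\ge 1$). But then one should be able to swap: since $v$ is a corner vertex, all but one of the edges at $v$ are leaf edges, and a maximality/exchange argument forces a contradiction — either $\sigma$ already contained some edge at $v$ that could be traded for $e$, or $\sigma \cup \{e\}$ witnesses that $\sigma$ was not maximal in the deletion. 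One must also handle the degenerate cases where $\vl(w) = 0$ (then $e$ is never in any face, and $\del(e,\bd^{\vl}(\F)) = \bd^{\vl}(\F)$, so after peeling off the isolated edge $e$ we reduce to a smaller forest) and where $\vl(v) = 0$ (similar). I expect the bulk of the work to be this exchange argument verifying condition (ii); once it is in place, conditions (i) follow immediately from \eqref{eq:bd-link-del} and the inductive hypothesis, and the theorem follows.
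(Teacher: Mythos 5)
Your overall framework -- induction on the number of edges, using the identities $\lk(e,\bd^{\vl}(\F)) = \bd^{\vl_{\F,e}}(\F-e)$ and $\del(e,\bd^{\vl}(\F)) = \bd^{\vl}(\F-e)$ so that condition (i) of \Cref{def:vertexdecomposable} follows by induction -- is exactly the paper's approach, and that part is fine. The problem is the choice of the shedding edge and the lack of an actual argument for condition (ii).

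You propose to take $e$ to be a \emph{leaf} edge incident to the corner vertex. This fails. Consider the tree $\F$ on vertices $a,v,u,b,w$ with edges $\{a,v\},\{v,u\},\{u,b\},\{v,w\}$ (so $v$ is a corner vertex adjacent to the internal vertex $u$), and take $\vl(v)=2$, $\vl(u)=\vl(a)=\vl(b)=\vl(w)=1$, so $\deg_\F(v)=3>\vl(v)$. With your choice $e=\{v,w\}$, the set $\sigma=\{\{a,v\},\{u,b\}\}$ is a facet of $\del(e,\bd^{\vl}(\F))$ (the edge $\{v,u\}$ cannot be added because of $u$), yet $\sigma\cup\{e\}$ is a face of $\bd^{\vl}(\F)$ since it gives degree $2$ at $v$ and degree $1$ everywhere else. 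Hence $\sigma$ is not a facet of $\bd^{\vl}(\F)$ and your $e$ is not a shedding vertex. The ``maximality/exchange'' sketch you gesture at does not produce a contradiction here: the facet $\sigma$ of the deletion simply fails to saturate the degree at $v$ because the one non-leaf edge at $v$ is blocked from the other end.

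The fix, which is what the paper does, is to choose $e$ to be the \emph{non-leaf} edge at the corner vertex $w$, i.e.\ $e=(w,v)$ with $v$ the unique internal neighbor of $w$ (or any neighbor if $w$ has none). Then every remaining edge at $w$ in $\F-e$ is a leaf edge, and there are at least $\vl(w)$ of them; since adding a leaf edge can only be obstructed at $w$, any facet $\sigma$ of $\del(e,\bd^{\vl}(\F))$ must satisfy $\deg_{(\F-e)[\sigma]}(w)=\vl(w)$, so $\sigma\cup\{e\}\notin\bd^{\vl}(\F)$ and $\sigma$ is already a facet of $\bd^{\vl}(\F)$. You also do not address the case $\deg_\F(w)\le\vl(w)$, where no edge at $w$ is a shedding vertex in the above sense; there the complex is a cone over $\bd^{\vl_{\F,\ell}}(\F-\ell)$ with apex any leaf edge $\ell$ at $w$, and one concludes via \Cref{prop:join of vd}. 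These two omissions -- the wrong shedding edge and the missing cone case -- are the genuine gaps.
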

\begin{proof}
  We prove this by induction on the number $n$ of edges in the forest. If $\F$ has only one edge then the result is clear. Let $\F$ has $n\geq 2$ edges and assume that all bounded degree complexes of forest with at most $n-1$ edges are vertex decomposable.
  
  If $\F$ has an isolated vertex $v$, then $\bd^{\vl}(\F)=\bd^{\vl_{\F,v}}(\F-v)$. Moreover, if $\vl(u)=0$ for a vertex $u \in V(\F)$ then also $\bd^{\vl}(\F)=\bd^{\vl_{\F,u}}(\F-u)$. Therefore, we can assume that $\F$ does not have any isolated vertex and $\vl(v)\neq 0$ for any $v \in V(\F)$. 
 
  If $\F$ has an isolated edge $e$ then the result is clear from \Cref{prop:join of vd} as $\bd^{\vl}(\F)$ is a cone over $\bd^{\vl_{\F,e}}(\F-e)$ with apex $e$. Otherwise, $\F$ will have a corner vertex, say $w$. Further, if deg$_{\F}(w)\leq \vl(w)$ then $\bd^{\vl}(\F)$ is a cone over $\bd^{\vl_{\F,\ell}}(\F-\ell)$ with apex $\ell$ for each leaf edge $\ell$ adjacent to $w$. In both cases the result follows from induction and \Cref{prop:join of vd}. 
  
 Now consider deg$_{\F}(w)> \vl(w)$. If $w$ is adjacent to an internal vertex $v$ then take $e=(w,v)$, otherwise choose $e=(w,v)$ for some $v \in N(w)$. 
  
  Observe that, 
  \begin{equation*}
  \begin{split}
      \lk(e, \bd^{\vl}(\F)) & = \bd^{\vl_{\F,e}}(\F-e),\mathrm{~and}\\
      \del(e, \bd^{\vl}(\F)) & = \bd^{\vl}(\F-e).
  \end{split}
  \end{equation*}
  Induction implies that both $\lk(e, \bd^{\vl}(\F))$ and $\del(e, \bd^{\vl}(\F))$ are vertex decomposable. Thus, it is now enough to show that $e=(w,v)$ is a shedding vertex of $\bd^{\vl}(\F)$.
  
  Let $\sigma$ be a facet of $\del(e, \bd^{\vl}(\F)) = \bd^{\vl}(\F-e)$ and $E_e=\{e^\prime\in E(\F-e) : w\in e^\prime\}$. Since $w$ is a corner vertex and deg$_{\F}(w)> \vl(w)$, $|E_e| \geq \vl(w)$ and all edges in $E_e$ are leaf edges of $\F$. Clearly, $E_e \subseteq E(\F-e)$ which implies that deg$_{(\F-e)[\sigma]}(w)=\sigma \cap E_e=\vl(w)$. Therefore, $\sigma \cup \{e\} \notin \bd^{\vl}(\F)$ implying that $\sigma$ is a facet of $\bd^{\vl}(\F)$.
\end{proof}

The technique used in the above proof can be applied to a bigger class of graphs. A subgraph $H$ of $G$ will be called \emph{cycle subgraph}, if $H\cong C_n$ for some $n\geq 3$. For $v\in V(G)$, $L_G(v)$ will denote the number of leaves adjacent to v in $G$.

\begin{proposition}\label{prop:bdcomplex}
Let $G$ be a graph and let $\vl$ be a labelling of $G$. If for each cycle subgraph $H$ of $G$ there is a vertex $v \in V(H)$ such that $L_G(v)\geq \vl(v)$, then $\bd^{\vl}(G)$ is vertex decomposable.
\end{proposition}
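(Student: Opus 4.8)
The plan is to mirror the proof of \Cref{thm:mainbdcomplex}, inducting on the number $n$ of edges of $G$; the only genuinely new ingredient is the choice of shedding vertex once $G$ carries a cycle. As there, I first clear away the degenerate configurations: an isolated vertex of $G$, and a vertex $u$ with $\vl(u)=0$, may each be deleted without changing $\bd^{\vl}(G)$ (giving $\bd^{\vl_{G,v}}(G-v)$, resp. $\bd^{\vl_{G,u}}(G-u)$), while if $G$ has an isolated edge $e$ then $\bd^{\vl}(G)$ is a cone with apex $e$ and \Cref{prop:join of vd} applies. So we may assume that $G$ has no isolated vertices, no isolated edges, and $\vl(v)\ge 1$ for all $v\in V(G)$ --- provided, and this is the point demanding the most care, one checks that each of these reductions carries the hypothesis on cycle subgraphs over to the smaller graph (the touchy case being the removal of a label-$0$ leaf adjacent to a vertex lying on a cycle). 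If the reduced graph is a forest we are done by \Cref{thm:mainbdcomplex}, so assume it contains a cycle subgraph.

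Fix a cycle subgraph $H$ of $G$ and, by hypothesis, a vertex $v_0\in V(H)$ with $L_G(v_0)\ge \vl(v_0)$. The two neighbours of $v_0$ along $H$ are not leaves, and the $L_G(v_0)\ge\vl(v_0)\ge 1$ leaf neighbours of $v_0$ are distinct from them, so $\deg_G(v_0)\ge\vl(v_0)+2$. Choose a neighbour $x$ of $v_0$ on $H$ and put $e=(v_0,x)$; since $\deg_G(x)\ge 2$ the edge $e$ is not a leaf edge, and since $\vl(v_0),\vl(x)\ge 1$, $e$ is a vertex of $\bd^{\vl}(G)$. Just as in \Cref{thm:mainbdcomplex},
\[
 \lk\!\bigl(e,\bd^{\vl}(G)\bigr)=\bd^{\vl_{G,e}}(G-e),\qquad
 \del\!\bigl(e,\bd^{\vl}(G)\bigr)=\bd^{\vl}(G-e),
\]
with $\vl_{G,e}$ as in \eqref{eq:labellingfore}. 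Both $(G-e,\vl)$ and $(G-e,\vl_{G,e})$ still satisfy the hypothesis: a cycle subgraph of $G-e$ is a cycle subgraph of $G$ avoiding $e$, and since $e$ is not a leaf edge we have $L_{G-e}(v')\ge L_G(v')$ for every vertex $v'$, so a good vertex of such a cycle in $G$ stays good in $G-e$ for $\vl$, hence also for the pointwise-smaller $\vl_{G,e}$. By the inductive hypothesis, $\lk(e,\bd^{\vl}(G))$ and $\del(e,\bd^{\vl}(G))$ are therefore vertex decomposable.

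It remains to show that $e$ is a shedding vertex, i.e. that every facet $\sigma$ of $\del(e,\bd^{\vl}(G))=\bd^{\vl}(G-e)$ is a facet of $\bd^{\vl}(G)$. The crucial observation is that a leaf edge at $v_0$ can always be adjoined to a face of $\bd^{\vl}(G-e)$: it raises the degree only at $v_0$ and at the corresponding leaf, whose degree becomes $1\le\vl(\text{leaf})$. Because $e$ is not a leaf edge, $v_0$ retains all $L_G(v_0)\ge\vl(v_0)$ of its leaf edges in $G-e$; hence if $\sigma$ were a facet with $\deg_{(G-e)[\sigma]}(v_0)<\vl(v_0)$, at least one of those leaf edges would lie outside $\sigma$ and could be added to it, contradicting maximality. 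So $\deg_{(G-e)[\sigma]}(v_0)=\vl(v_0)$, whence $\deg_{G[\sigma\cup\{e\}]}(v_0)=\vl(v_0)+1$ and $\sigma\cup\{e\}\notin\bd^{\vl}(G)$; since any face of $\bd^{\vl}(G)$ containing $\sigma$ either avoids $e$ (hence lies in $\bd^{\vl}(G-e)$ and equals $\sigma$) or contains $\sigma\cup\{e\}$ (impossible), the face $\sigma$ is maximal in $\bd^{\vl}(G)$. This completes the induction.

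The single new idea, relative to \Cref{thm:mainbdcomplex}, is the choice of shedding edge: over a forest one sheds a leaf edge at a corner vertex, whereas here one sheds a cycle edge at the good vertex $v_0$, precisely so that all $L_G(v_0)$ leaf edges at $v_0$ survive the deletion and force every facet of the deletion to saturate $v_0$. I expect the real obstacle to be not this step but the unglamorous bookkeeping around it --- verifying that the cycle hypothesis persists through the initial degenerate reductions (especially the label-$0$ case) and through the passage from $G$ to $G-e$ --- since once the correct edge is pinned down, the shedding argument is a verbatim copy of the forest one.
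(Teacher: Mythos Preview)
Your choice of shedding vertex and the shedding argument itself coincide with the paper's: pick a cycle $H$, a vertex $v_0\in V(H)$ with $L_G(v_0)\ge\vl(v_0)$, take $e=(v_0,x)$ with $x$ the neighbour of $v_0$ along $H$, and use the surviving leaf edges at $v_0$ to force $\deg_{(G-e)[\sigma]}(v_0)=\vl(v_0)$ for every facet $\sigma$ of the deletion. Where the paper differs is in the induction parameter: it inducts on the \emph{number of cycle subgraphs} of $G$, not on $|E(G)|$. The base case is then exactly \Cref{thm:mainbdcomplex}, and in the inductive step one goes straight to a cycle and sheds --- no preliminary reductions (isolated vertices, isolated edges, label-$0$ vertices) are performed at all. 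Passing to $(G-e,\vl)$ and $(G-e,\vl_{G,e})$ preserves the hypothesis for the reason you already isolated: $e$ lies on a cycle, hence is not a leaf edge, so $L_{G-e}\ge L_G$ pointwise and every cycle of $G-e$ is already a cycle of $G$.

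This difference is not cosmetic. Your ``touchy case'' is not bookkeeping --- the check you defer actually \emph{fails}. Take the triangle on $a,b,c$ with one leaf $\ell$ attached to $a$, and set $\vl(a)=\vl(b)=\vl(c)=1$, $\vl(\ell)=0$. The unique cycle is $abc$ and $a$ is good since $L_G(a)=1\ge\vl(a)$; but after deleting the label-$0$ vertex $\ell$ you are left with a bare triangle with no leaves and all labels positive, so the cycle hypothesis is lost. Hence you cannot reduce to ``$\vl(v)\ge 1$ for all $v$'' while retaining the hypothesis, and your proof as written has a genuine gap at precisely the point you suspected. The paper's induction on cycle count avoids the problem by never reducing; within your edge-count framework you can instead drop the label-$0$ reduction and observe that if the chosen cycle edge $e$ fails to be a vertex of $\bd^{\vl}(G)$ (because an endpoint has label $0$), then $\bd^{\vl}(G)=\bd^{\vl}(G-e)$ identically and the hypothesis persists in $G-e$ since $e$ is not a leaf edge. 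Once that is in place, the remainder of your argument and the paper's are the same.
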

\begin{proof}
  Proceed by induction on the number $n$ of cycle subgraphs of $G$. If $n=0$, {\it i.e.}, $G$ has no cycle then the result follows from \Cref{thm:mainbdcomplex}.
  
  Now consider, $G$ is a graph with $n$ cycles and $H$ is a cycle subgraph of $G$. Let $v,w\in V(H)$ such that $L_G(v)\geq \vl(v)$ and $e=(v,w)\in E(H)$. It is easy to observe that, $\lk (e, \bd^{\vl} (G))=\bd^{\vl_{G,e}}(G-e)$ and the number of cycles in $G-e$ is less than $n$. Since $e$ is a non-leaf edge, $G-e$ and the labelling $\vl_{G,e}$ satisfy the hypothesis of \Cref{prop:bdcomplex}. Thus, by induction, $\lk (e, \bd^{\vl} (G))$ is vertex decomposable. Using similar arguments we get that $\del(e, \bd^{\vl} (G))=\bd^{\vl}(G-e)$ is also vertex decomposable. Moreover, for any facet $\sigma$ of $\del(e, \bd^{\vl} (G))$, $\deg_{G[\sigma]}(v) = \vl(v)$ (since $L_{G-e}(v)=L_G(v)\geq \vl(v)$) which implies that $\sigma$ is a facet of $\bd^{\vl} (G)$. This completes the proof of \Cref{prop:bdcomplex}.
\end{proof}

A \emph{fully whiskered graph} is a graph in which every non-leaf vertex is adjacent to at least one leaf vertex. The following is an immediate corollary of \Cref{prop:bdcomplex}.

\begin{corollary}
The $1$-matching complex of any fully whiskered graph is vertex decomposable. 
\end{corollary}

\section{The non-cover complex}\label{sec:noncover}
A subset $I \subseteq V(G)$ is called an \emph{independent set} of graph $G$ if the induced subgraph $G[I]$ does not have any edge. A subset $S \subseteq V(G)$ is called a \emph{cover} of $G$ if $V(G)\setminus S$ is an independent set of $G$. 

The \emph{independence complex} of a graph $G$, denoted as $\ind(G)$, is a simplicial complex whose simplices are all independent sets of $G$. The \emph{non-cover complex} of graph $G$, denoted  $\NC(G)$, is a simplicial complex whose simplices are non-covers of $G$. 

{\bf Example:}
\Cref{fig:example of NC} consists of graph $G_2(2,1)$ and $\NC(G_2(2,1))$. The complex $\NC(G_2(2,1))$ has $4$ factes, namely $\{v_1,v_3,e_4\}, \{v_2,v_3,v_5\}$, $\{v_2,v_4,v_5\}$ and $\{v_3,v_4,v_5\}$.

\begin{figure}[H]
	\begin{subfigure}[]{0.45 \textwidth}
		\centering
		\begin{tikzpicture}
 [scale=0.5, vertices/.style={draw, fill=black, circle, inner sep=1.0pt}]
        \node[vertices, label=below:{$v_1$}] (v1) at (0,0)  {};
		\node[vertices, label=below:{$v_2$}] (v2) at (5,0)  {};
		\node[vertices, label=above:{$v_3$}] (l11) at (-2.5,2.7)  {};
		\node[vertices, label=above:{$v_4$}] (l12) at (0,3.5)  {};
		\node[vertices, label=above:{$v_5$}] (l21) at (5.5,3.2)  {};
		
\foreach \to/\from in {v1/v2}
\path (v1) edge node[pos=0.5,below] {} (v2);
\path (v1) edge node[pos=0.5,left] {} (l11);
\path (v1) edge node[pos=0.5,left] {} (l12);
\path (v2) edge node[pos=0.5,left] {} (l21);
\end{tikzpicture}\caption{$G_2(2,1)$}\label{fig:G221}
	\end{subfigure}
	\begin{subfigure}[]{0.45 \textwidth}
		\centering
	\begin{tikzpicture}
 [scale=0.32, every node/.style={draw=none}]
\node[label=left:{}, draw, fill=black, circle, inner sep=1.0pt] (a) at (0,0) {};
\node[label=left:{$v_3$},draw, fill=black, circle, inner sep=1.0pt] (b) at (-2,4) {};
\node[label=left:{$v_2$},draw, fill=black, circle, inner sep=1.0pt] (c) at (-4,-3) {};
\node[label=right:{$v_4$},draw, fill=black, circle, inner sep=1.0pt] (f) at (5,-1) {};
\node[label=left:{$v_5$},draw=none,inner sep=0.0pt] (g) at (-5.5,0.8) {};
\node[label=right:{$v_1$},draw, fill=black, circle, inner sep=1.0pt] (h) at (3.5,5.1) {};

\foreach \to/\from in {a/b,a/c,a/f,b/c,b/f,b/h,f/h}
\draw [-] (\to)--(\from);
\filldraw[fill=blue!30, draw=black] (0,0)--(-2,4)--(-4,-3)--cycle;
\filldraw[fill=yellow!50, draw=black] (0,0)--(-2,4)--(5,-1)--cycle;
\filldraw[fill=gray!50, draw=black] (0,0)--(5,-1)--(-4,-3)--cycle;
\filldraw[fill=gray!20, draw=black] (-2,4)--(3.5,5.1)--(5,-1)--cycle;
\draw[dashed,thick, ->] (g) to (a);
\end{tikzpicture}\caption{$\NC(G_2(2,1))$}
	\end{subfigure}
	\caption{} \label{fig:example of NC}
\end{figure}

The \emph{(combinatorial) Alexander dual} $AD(K)$ of a simplicial complex $K$ is the simplicial complex 
$$AD(K)=\{\sigma \subseteq V(K) : V(K)\sm \sigma \notin K\}.$$
It is easy to see that  $\NC(G)$ is the Alexander dual of $\ind(G)$. Independence complexes have been studied extensively in last few decades and the homotopy type of these complex have been computed for various classes of graphs (for instance see \cite{ bar13, eng08, wood09}). Even though the reduced homology of $\NC(G)$ is related to that of $\ind(G)$ due to the Alexander duality theorem\footnote{{\bf Alexander duality theorem (\cite{stan82}):} Let $K$ be a simplicial complex and $V(K)\notin K$. Then for all $-1 \leq i \leq |V(K)|-2$, $\tilde{H}_i(AD(K))= \tilde{H}_{|V(K)|-i-3}(K)$. Here, $\tilde{H}_i(K)$ denotes the $i^{\text{th}}$ reduced homology group of $K$.}, the homotopy type of non-cover complexes remains mysterious. See \cite{MR14} for results related to the topology of the Alexander dual.

It is easy to observe that the facets of $\NC(G)$ are in one to one correspondence with the edges of $G$. In particular, any edge $(u,v)\in E(G)$ gives the unique facet $V(G)\setminus \{u,v\}$ and vice versa. 

\begin{lemma}\label{lemma:disjoint unionnoncover}
Let $G_1$ and $G_2$ are two disjoint connected graphs such that $|E(G_1)|, ~ |E(G_2)|\geq 1$. Then, $\NC(G_1\sqcup G_2)\simeq \Sigma (\NC(G_1)\ast \NC(G_2))$.
\end{lemma}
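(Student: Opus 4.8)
The plan is to decompose $\NC(G_1 \sqcup G_2)$ as a union of two contractible subcomplexes and then apply \Cref{lemma:glueing}. Let $V_1 = V(G_1)$ and $V_2 = V(G_2)$, so that $V(\NC(G_1 \sqcup G_2)) = V_1 \sqcup V_2$. Observe that a subset $\sigma \subseteq V_1 \sqcup V_2$ is a non-cover of $G_1 \sqcup G_2$ precisely when either $\sigma \cap V_1$ is a non-cover of $G_1$ \emph{or} $\sigma \cap V_2$ is a non-cover of $G_2$ (since some edge of $G_1 \sqcup G_2$ fails to be hit by the complement of $\sigma$, and that edge lies entirely in one of the two components). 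This immediately suggests the two subcomplexes
\begin{equation*}
    K_1 := \{\sigma : \sigma \cap V_1 \in \NC(G_1)\} = \NC(G_1) \ast \Delta^{V_2}, \qquad
    K_2 := \{\sigma : \sigma \cap V_2 \in \NC(G_2)\} = \Delta^{V_1} \ast \NC(G_2),
\end{equation*}
so that $\NC(G_1 \sqcup G_2) = K_1 \cup K_2$. Each $K_i$ is a join with a full simplex, hence a cone, hence contractible.

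The next step is to identify the intersection $K_1 \cap K_2$. A face $\sigma$ lies in both $K_1$ and $K_2$ iff $\sigma \cap V_1 \in \NC(G_1)$ and $\sigma \cap V_2 \in \NC(G_2)$, which says exactly that $\sigma \in \NC(G_1) \ast \NC(G_2)$. Therefore $K_1 \cap K_2 = \NC(G_1) \ast \NC(G_2)$. Applying \Cref{lemma:glueing} with $K = \NC(G_1 \sqcup G_2)$, $K_1$, $K_2$ as above — which is legitimate since both are contractible and their union is all of $K$ — yields
\begin{equation*}
    \NC(G_1 \sqcup G_2) \simeq \Sigma\bigl(K_1 \cap K_2\bigr) = \Sigma\bigl(\NC(G_1) \ast \NC(G_2)\bigr),
\end{equation*}
which is the claim.

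The only genuine content to verify carefully is the set-theoretic identity $\sigma \in \NC(G_1 \sqcup G_2) \iff \sigma\cap V_1 \in \NC(G_1) \text{ or } \sigma \cap V_2 \in \NC(G_2)$, and for this one must use the hypothesis that each $G_i$ has at least one edge: this guarantees that the "or" is genuinely restrictive, i.e.\ that $\NC(G_i)$ is not the full simplex $\Delta^{V_i}$ (if $G_i$ had no edges, $\NC(G_i)$ would be all of $\Delta^{V_i}$ and $K_1 \cap K_2$ would itself be contractible, breaking the suspension formula — indeed in that degenerate case $\NC(G_1 \sqcup G_2)$ is just a cone). I also need the standard facts that a cone is contractible and that $\Delta^{V} \ast L = C_{\text{(any apex of }V)}(\cdots)$ is a cone whenever $V \ne \emptyset$; these are immediate. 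I do not anticipate a real obstacle — the argument is a clean instance of the Mayer--Vietoris-style gluing lemma — the main care point is simply bookkeeping the join decomposition and confirming the edge hypotheses are what make the intersection a nontrivial join rather than a cone.
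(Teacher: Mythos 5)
Your proof is correct and takes essentially the same route as the paper: cover $\NC(G_1 \sqcup G_2)$ by the two contractible cones $\NC(G_1) \ast \Delta^{V_2}$ and $\Delta^{V_1} \ast \NC(G_2)$, identify the intersection as $\NC(G_1) \ast \NC(G_2)$, and invoke the gluing lemma. One small slip in your closing commentary: if $G_i$ had no edges, $\NC(G_i)$ would be the \emph{void} complex (no faces at all, since $V(G_i)\setminus\sigma$ is always independent), not the full simplex $\Delta^{V_i}$; the actual degeneracy is that one of the $K_i$ and their intersection become void (hence not contractible), rather than the intersection becoming contractible --- though you are right that the suspension formula then fails and $\NC(G_1\sqcup G_2)$ is just a cone.
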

\begin{proof}
  Let $K_1=\Delta^{V(G_1)}\ast \NC(G_2)$ and $K_2=\NC(G_1)\ast \Delta^{V(G_2)}$. Since $|E(G_1)|, ~ |E(G_2)|\geq 1$, both $K_1$ and $K_2$ are contractible and $\NC(G_1\sqcup G_2)=K_1\bigcup K_2$. Thus, from \Cref{lemma:glueing}, we get that $\NC(G_1\sqcup G_2) \simeq \Sigma (K_1\cap K_2)$. Now the proof follows from the observation that $K_1\cap K_2 = \NC(G_1)\ast \NC(G_2)$.
\end{proof}

For a vertex $w\in V(G)$, let st$_w(G)$ denotes the graph on vertex set $N_G[w]$ and $E(\text{st}_w(G))=\{(v,w) : v \in N_G(w)\}$.

\begin{theorem}\label{thm:mainnoncover}
Let $v$ be a leaf vertex of graph $G$, $(v,w) \in E(G)$ and $\mathrm{deg}_G(w) >1$. Then, 
\begin{equation*}
    \begin{split}
        \NC(G) & \simeq \Sigma^{|N_G(w)|-1}\NC(G-N_G[w]).
    \end{split}
\end{equation*}
 \end{theorem}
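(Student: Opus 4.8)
The plan is to analyze the non-cover complex $\NC(G)$ by peeling off the vertex $v$ and then the star of $w$, using the link-deletion decomposition in \eqref{eq:linkdel}. First I would compute $\lk(v, \NC(G))$ and $\del(v, \NC(G))$. Since the facets of $\NC(G)$ correspond to edges of $G$, with edge $e$ giving facet $V(G)\sm e$, the facets containing $v$ are exactly those coming from edges not incident to $v$; as $v$ is a leaf with unique neighbor $w$, the only edge incident to $v$ is $(v,w)$, so every facet of $\NC(G)$ except $V(G)\sm\{v,w\}$ contains $v$. This should give $\del(v,\NC(G)) = C_{?}(\cdots)$ — more precisely, I expect $\del(v,\NC(G))$ to be a cone (hence contractible), since every facet not equal to $V(G)\sm\{v,w\}$ contains some fixed vertex, or alternatively one shows directly it is a cone with apex a suitable vertex. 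Meanwhile $\lk(v,\NC(G))$ consists of those $\tau$ with $v\notin\tau$ and $\tau\cup\{v\}$ a non-cover of $G$, which should work out to $\NC(G)$ restricted appropriately — in fact I'd expect $\lk(v,\NC(G)) \cong \NC(G - v)$ or a cone thereover, depending on whether $w$ becomes isolated.

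The key structural step is to iterate this on $G - v$, or rather to handle all leaves at $w$ and then $w$ itself at once. The cleaner route: after removing $v$, work with $\mathrm{st}_w(G)$. I would argue that $\NC(G)$ decomposes along the vertices of $N_G(w)\sm\{\text{one leaf}\}$ — each such vertex $u$ in $N_G(w)$ is such that removing it from consideration suspends the complex, because the edge $(u,w)$ still forces its facet, while the deletions remain contractible cones. Concretely, I expect an inductive claim: if $w$ has a leaf neighbor, then $\NC(G) \simeq \Sigma\, \NC(G')$ where $G'$ is $G$ with one neighbor of $w$ deleted, and one applies \Cref{lemma:glueing} by writing $\NC(G)$ as a union of two contractible cones whose intersection is (a suspension of) the smaller complex. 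Peeling off the $|N_G(w)|$ neighbors one at a time — at each stage $w$ still has a leaf neighbor until the last removal — yields $|N_G(w)| - 1$ suspensions, and after removing all of $N_G[w]$ one is left with $\NC(G - N_G[w])$.

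More carefully, I would set up the decomposition directly rather than by a one-step induction: since $(v,w)\in E(G)$, every non-cover of $G$ must omit $v$ or $w$ — wait, that is not right; rather, $\sigma$ is a non-cover iff $V(G)\sm\sigma$ contains an edge, iff some edge of $G$ is disjoint from $\sigma$. So $\NC(G) = \bigcup_{e\in E(G)} \Delta^{V(G)\sm e}$. Grouping: let $A = \bigcup_{e \ni w}\Delta^{V(G)\sm e}$ and $B = \bigcup_{e\not\ni w}\Delta^{V(G)\sm e}$. Both $A$ and $B$ should be contractible — $A$ is a union of simplices all of which omit $w$, hence $A \subseteq \del(w, \cdot)$ and is a cone with apex $v$ (every edge at $w$ other than $(v,w)$ gives a facet containing $v$, and $V(G)\sm\{v,w\}\subseteq$ nothing forces it... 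I need to check $A$ is a cone with apex $v$: a facet $V(G)\sm\{v,w\}$ does not contain $v$, so this is the obstacle); $B$ is a cone with apex $w$ since $w\notin e$ for all $e$ in the index set. Then $A\cap B = \bigcup_{e\not\ni w}\Delta^{(V(G)\sm e)\cap(\text{stuff in }A)}$. The main obstacle I anticipate is precisely this bookkeeping: showing $A$ is contractible (it contains the facet $V(G)\sm\{v,w\}$ which spoils the naive cone structure, so one must instead observe $A$ deformation retracts appropriately, perhaps using that $A = \del(w,\NC(G))$ and every facet of it contains $v$ except one, handled by a further cone argument), and then identifying $A\cap B$ with $\Sigma^{|N_G(w)|-2}\NC(G - N_G[w])$ so that one application of \Cref{lemma:glueing} gives the extra suspension for a total of $|N_G(w)| - 1$. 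The homotopy equivalences \eqref{remark:join of space with spheres is suspension} will be used to collapse joins with boundaries of simplices into suspensions.
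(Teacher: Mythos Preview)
Your proposal circles the right ideas but never settles on a working decomposition, and the obstacle you flag is not the real one. The paper's key move is to apply the link--deletion splitting \eqref{eq:linkdel} at $w$, \emph{not} at $v$: because $v$ is a leaf whose only edge is $(v,w)$, any simplex $\tau$ with $\tau\cup\{w\}\in\NC(G)$ must miss an edge other than $(v,w)$, and then $\tau\cup\{v,w\}$ still misses that edge; hence $\lk(w,\NC(G))$ is a cone with apex $v$ and is contractible. This yields $\NC(G)\simeq\del(w,\NC(G))$ in one step. The paper then proves the equality
\[
\del(w,\NC(G))=\del\bigl(w,\NC(\mathrm{st}_w(G)\sqcup(G-N_G[w]))\bigr),
\]
after which \Cref{lemma:disjoint unionnoncover} together with $\NC(\mathrm{st}_w(G))=\partial(\Delta^{N_G(w)})\simeq\S^{|N_G(w)|-2}$ and \eqref{remark:join of space with spheres is suspension} finish the computation directly.

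Your first attempt, deleting $v$, fails because $\del(v,\NC(G))$ is not a cone: its facets are $V(G)\setminus\{v,w\}$ together with all $V(G)\setminus\{a,b,v\}$ for edges $(a,b)$ not incident to $v$, and since $\deg_G(w)>1$ some such edge is incident to $w$, so no single vertex lies in every facet. Your $A/B$ decomposition is closer in spirit, but you misdiagnose the difficulty: in fact $A=\bigcup_{e\ni w}\Delta^{V(G)\setminus e}=\partial(\Delta^{N_G(w)})*\Delta^{V(G)\setminus N_G[w]}$, which \emph{is} a cone on any vertex of $V(G)\setminus N_G[w]$ (just not on $v$). The real work left in your route would be identifying $A\cap B$, which you do not attempt and which is not straightforward; the paper sidesteps this entirely by reducing to the non-cover complex of a disjoint union, where \Cref{lemma:disjoint unionnoncover} applies off the shelf. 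Your one-neighbor-at-a-time induction is also unjustified as stated: there is no general reason that deleting a single neighbor of $w$ suspends $\NC(G)$ exactly once.
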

 \begin{proof}
   Since $v$ is a leaf vertex, $\lk(w, \NC(G))$ is a cone with an apex $a$, hence contractible. Thus, from \Cref{eq:linkdel}, we have 
   \begin{equation*}
           \NC(G) \simeq \del(w,\NC(G))\bigvee \Sigma (\lk(w,\NC(G))) \simeq \del(w,\NC(G)).
   \end{equation*}
   
   \begin{claim}\label{claim:noncover}
   $\del(w,\NC(G)) = \del\big(w, \NC(\text{st}_w(G) \sqcup G-N_G[w])\big)$
   \end{claim}
   \begin{proof}[Proof of \Cref{claim:noncover}]
     Clearly, $\del\big(w, \NC(\text{st}_w(G) \sqcup G-N_G[w])\big) \subseteq \del(w,\NC(G))$. To show the other way inclusion, let $\sigma $ be a facet of $\del(w,\NC(G))$. We know that there exist $(a,b)\in E(G)$ such that $\si = V(G)\sm \{a,b,w\}$. If $(a,b)$ is an edge of $\text{st}_w(G)$ or of $G-N_G[w]$ then the result follows. Otherwise, without loss of generality assume that $a\in V(\text{st}_w(G))$. In this case $\sigma \subseteq V(G)\sm \{a,w\}$ which is a facet of $\del\big(w, \NC(\text{st}_w(G) \sqcup G-N_G[w])\big)$.
   \end{proof}
   From \Cref{lemma:disjoint unionnoncover}, it is easy to see that $\del\big(w, \NC(\text{st}_w(G) \sqcup G-N_G[w])\big) \simeq \NC(\text{st}_w(G) \sqcup G-N_G[w])$ (since $w\notin \NC(\text{st}_w(G)$).   Therefore, \Cref{thm:mainnoncover} follows from \Cref{lemma:disjoint unionnoncover}, \Cref{remark:join of space with spheres is suspension} and the observation that $\NC(\text{st}_w(G)= \partial(\Delta^{N_G(w)})$.
 \end{proof}
 
 Ehrenborg and Hetyei \cite{EH06} showed that, for any forest $\F$, the complex $\ind(\F)$ is either contractible or homotopy equivalent to a sphere. The following result is a direct consequence of \Cref{lemma:glueing} and \Cref{thm:mainnoncover}, it says that the Alexander dual of independence complexes of forests also have the same homotopy type.
 
 \begin{corollary}
 For any forest $\F$, the complex $\NC(\F)$ is either contractible or homotopy equivalent to a sphere.
 \end{corollary}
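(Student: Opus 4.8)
The plan is to induct on the number of edges in the forest $\F$, using \Cref{thm:mainnoncover} to reduce to a smaller forest at each step. Write $\F = \F_1 \sqcup \cdots \sqcup \F_r$ as a disjoint union of its connected components (trees). We may discard any isolated vertices, since an isolated vertex $v$ makes $\NC(\F)$ a cone with apex $v$ (the complement of any edge still contains $v$), hence contractible, so assume $\F$ has no isolated vertex. If $\F$ has an isolated edge as one of its components, or more generally if $r \geq 2$, then \Cref{lemma:disjoint unionnoncover} gives $\NC(\F_1 \sqcup \cdots \sqcup \F_r) \simeq \Sigma\bigl(\NC(\F_1) \ast \NC(\F_2 \sqcup \cdots)\bigr)$; by induction on the number of components each factor is either contractible or a sphere, and using \Cref{remark:join of space with spheres is suspension} together with the fact that the join of a sphere with a contractible space is contractible (a cone), the whole thing is again contractible or a sphere. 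So we reduce to the case where $\F$ is a single tree $T$ with at least one edge.

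For the connected case, if $T$ has exactly one edge then $\NC(T)$ is the empty complex on two vertices, i.e.\ $\S^{-1} \cup \{\text{the other vertex}\}$ — more carefully, $\NC(K_2)$ has the single facet $\emptyset$ together with no other faces, which is $\S^{-1}$, a $(-1)$-sphere; in any case it is a sphere. If $T$ has more than one edge, pick a leaf vertex $v$ with neighbor $w$. Since $T$ is a tree with $\geq 2$ edges, not every vertex is a leaf, and we can in fact choose $v$ so that $\deg_T(w) > 1$ (any leaf works here, since if $\deg_T(w)=1$ then $T = K_2$). Now apply \Cref{thm:mainnoncover}:
\[
\NC(T) \simeq \Sigma^{|N_T(w)|-1}\,\NC\bigl(T - N_T[w]\bigr).
\]
The graph $T - N_T[w]$ is a forest (a subgraph of a tree) with strictly fewer edges than $T$ — at least the leaf edge $(v,w)$ is removed — so by the induction hypothesis $\NC(T - N_T[w])$ is either contractible or homotopy equivalent to some sphere $\S^t$. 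In the first case the iterated suspension is contractible; in the second, by \Cref{remark:join of space with spheres is suspension} the iterated suspension $\Sigma^{|N_T(w)|-1}\S^t \simeq \S^{t + |N_T(w)| - 1}$ is again a sphere. This closes the induction.

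The main obstacle, such as it is, is bookkeeping at the base cases and the degenerate dimensions: one must be careful about what $\NC$ of a single edge is (the $(-1)$-sphere / empty complex), about the convention that an "empty wedge" means contractible, and about the edge case where $T - N_T[w]$ has no edges (then its non-cover complex is a full simplex or empty, hence contractible, and suspending keeps it contractible). One should also double-check that the reduction in \Cref{thm:mainnoncover} genuinely applies — i.e.\ that after deleting isolated vertices we really do have a leaf $v$ with a neighbor $w$ of degree $>1$ whenever $T$ has at least two edges — but this is immediate from the structure of trees. No genuinely hard step arises, since \Cref{thm:mainnoncover} and \Cref{lemma:disjoint unionnoncover} do all the topological work; the corollary is essentially an exercise in applying them inductively and tracking which spheres appear.
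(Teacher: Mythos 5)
Your overall strategy matches what the paper intends: induct on the number of edges, handle isolated vertices via the cone observation, handle disconnected forests via \Cref{lemma:disjoint unionnoncover}, and reduce connected trees via \Cref{thm:mainnoncover}. The paper gives no details at all (it simply calls the corollary a direct consequence of those two results), so your write-up is essentially the intended argument spelled out.

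The one place you go astray is the edge case where $T - N_T[w]$ has no edges, and here the error is substantive even though the conclusion survives. By the paper's definition, a graph with no edges has the property that \emph{every} subset of its vertex set is a cover, so $\NC$ of an edgeless graph is the \emph{void} complex (no faces at all), not ``a full simplex or empty.'' Moreover, ``suspending keeps it contractible'' is false in the other sub-case: when $T$ is a star $K_{1,n}$ with center $w$, one has $T - N_T[w] = \emptyset$ and $\NC(T) = \partial(\Delta^{N_T(w)}) \simeq \S^{n-2}$, a genuine sphere, so the outcome here is \emph{sphere}, not contractible. A further wrinkle is that the proof of \Cref{thm:mainnoncover} invokes \Cref{lemma:disjoint unionnoncover}, which requires both components to have at least one edge, so the theorem's formula $\NC(T) \simeq \Sigma^{|N_T(w)|-1}\NC(T - N_T[w])$ is not justified by the paper's argument when $T - N_T[w]$ is edgeless; you cannot simply plug into it. The clean fix is to handle this case directly from what the theorem's proof actually establishes: $\NC(T) \simeq \del\bigl(w, \NC(\mathrm{st}_w(T) \sqcup (T - N_T[w]))\bigr)$. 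If $T - N_T[w]$ has a vertex but no edge, that vertex lies in every facet, so the complex is a cone and $\NC(T)$ is contractible; if $T - N_T[w]$ is empty, the complex is $\partial(\Delta^{N_T(w)})$, a sphere. With that patch the induction closes correctly.
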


We now discuss the vertex decomposability of non-cover complexes. If $G$ contains an isolated vertex $v$ and at least one edge then $\NC(G)=C_v(\NC(G-v))$. Thus, \Cref{prop:join of vd} implies that $\NC(G)$ is vertex decomposable if and only if $\NC(G-v)$ is vertex decomposable. 

\begin{proposition}\label{prop:noncovershelling}
Let $G_1$ and $G_2$ are two disjoint graphs such that $|E(G_1)|, ~ |E(G_2)|\geq 1$. Then, $\NC(G_1\sqcup G_2)$ is not shellable hence not vertex decomposable.
\end{proposition}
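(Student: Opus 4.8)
The plan is to read the facets of $\NC(G_1\sqcup G_2)$ directly off the description recorded just above and to exhibit a partition of the facet set that no shelling order can respect. Write $G=G_1\sqcup G_2$ and $n=|V(G)|$. Every facet of $\NC(G)$ equals $V(G)\setminus\{u,v\}$ for some edge $(u,v)$ of $G$, and since each edge of $G$ lies wholly inside $G_1$ or wholly inside $G_2$, the facet set splits as $\mathcal{A}\sqcup\mathcal{B}$, where $\mathcal{A}=\{V(G)\setminus\{u,v\}:(u,v)\in E(G_1)\}$ and $\mathcal{B}=\{V(G)\setminus\{u,v\}:(u,v)\in E(G_2)\}$; both classes are nonempty because $|E(G_1)|,|E(G_2)|\ge 1$ (so, incidentally, $n\ge 4$ and $\NC(G)$ has at least two facets, hence is not a simplex). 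Note also that every facet has exactly $n-2$ vertices, so $\NC(G)$ is pure of dimension $n-3$.

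The second step is an elementary counting estimate that separates the two classes. If $F\in\mathcal{A}$ and $F'\in\mathcal{B}$, write $F=V(G)\setminus\{a,b\}$ with $a,b\in V(G_1)$ and $F'=V(G)\setminus\{c,d\}$ with $c,d\in V(G_2)$; then $a,b,c,d$ are four distinct vertices, so $F\cap F'=V(G)\setminus\{a,b,c,d\}$ has only $n-4$ vertices. Consequently a codimension-$1$ face of $F'$, which has $n-3$ vertices, can never be contained in $F$, since it would then have to lie inside $F\cap F'$. In other words: no codimension-$1$ face of a facet in $\mathcal{B}$ is contained in any facet of $\mathcal{A}$, and symmetrically.

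Now I invoke the standard local obstruction to shellability. Suppose, for contradiction, that $F_1,\dots,F_t$ is a shelling order of $\NC(G)$ (see \Cref{def:shellability}); without loss of generality $F_1\in\mathcal{A}$, and since $\mathcal{B}\neq\emptyset$ we may let $r$ be the least index with $F_r\in\mathcal{B}$, so $r\ge 2$. By the shelling condition, $\big(\bigcup_{j<r}\Delta^{F_j}\big)\cap\Delta^{F_r}$ is $(\dim(\Delta^{F_r})-1)$-dimensional, hence it contains a face $\rho$ with $|\rho|=|F_r|-1$; this $\rho$ is a codimension-$1$ face of the facet $F_r\in\mathcal{B}$ that is contained in some $F_j$ with $j<r$. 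But every such $F_j$ lies in $\mathcal{A}$ by minimality of $r$, contradicting the separation estimate. Hence $\NC(G_1\sqcup G_2)$ is not shellable, and since vertex decomposable complexes are shellable (Wachs, as recalled in the introduction), it is not vertex decomposable either.

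The only point requiring a little care is the last step: one should check that the $(\dim(\Delta^{F_r})-1)$-dimensional face $\rho$ produced by the shelling really is a codimension-$1$ face of $F_r$ that is contained in a genuinely earlier facet $F_j\neq F_r$ — both follow immediately from the definitions (using $|\rho|=|F_r|-1<|F_r|$), but this is exactly where the contradiction with the separation estimate is drawn. Everything else is elementary manipulation with the explicit facets $V(G)\setminus\{u,v\}$.
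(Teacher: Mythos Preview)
Your argument is correct and follows essentially the same route as the paper's: both partition the facets by whether the corresponding edge lies in $G_1$ or $G_2$, note that facets from different parts intersect in only $n-4$ vertices, take the first facet $F_r$ in a putative shelling order lying in the opposite class from $F_1$, and observe that the shelling condition fails at that step. The paper phrases the final contradiction by computing $\dim\!\big((\bigcup_{j<r}\Delta^{F_j})\cap\Delta^{F_r}\big)<\dim(\Delta^{F_r})-1$ directly, while you equivalently produce a would-be codimension-$1$ face $\rho$ and show it cannot live in any earlier facet; these are the same observation.
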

\begin{proof}
  On contrary, assume that $\NC(G_1\sqcup G_2)$ is shellable with the shelling order $F_1,\dots, F_t$. Without loss of generality, assume that $F_1 = V(G_1\sqcup G_2)\sm \{a_1,b_1\}$ where $(a_1,b_1)\in E(G_1)$. Define $r=\text{min}\{ i \in [t] : V(G_2) \nsubseteq F_i\}$. Clearly, $1< r\leq t$. It is easy to see that, for each $1\leq j <r$, $|F_r\cap F_j| = |F_r|-2$. Thus, dim$\Big(\big(\bigcup\limits_{1\leq j <r}\Delta^{F_j}\big) \cap \Delta^{F_r}\Big)<\text{dim}(\Delta^{F_r})-1$ which contradicts the fact that $F_1,\dots,F_t$ is a shelling order.
\end{proof}

Recall that $v$ is a decomposing vertex of $\NC(G)$ if $v$ is a shedding vertex of $\NC(G)$, and both $\lk(v,\NC(G))$ and $\del(v,\NC(G))$ are vertex decomposable. Thus, $\NC(G)$ is vertex decomposable if and only if $\NC(G)$ has a decomposing vertex.

\begin{theorem}\label{theorem:mainnoncover2}
Let $\F$ be a forest without any isolated vertex. Then, the complex $\NC(\F)$ is vertex decomposable if and only if $\F$ is connected and has at most two internal vertices.
\end{theorem}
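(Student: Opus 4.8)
The plan is to prove both directions by combining the structural results already established. For the "only if" direction, I would argue contrapositively. If $\mathcal{F}$ is disconnected, say $\mathcal{F} = G_1 \sqcup G_2$ with each having at least one edge (which holds since $\mathcal{F}$ has no isolated vertex), then \Cref{prop:noncovershelling} immediately gives that $\NC(\mathcal{F})$ is not shellable, hence not vertex decomposable. So assume $\mathcal{F}$ is a tree. I must then show that if $\mathcal{F}$ has three or more internal vertices, $\NC(\mathcal{F})$ is not vertex decomposable. The idea is to use \Cref{prop:basic properties of trees} to locate a corner vertex $w$ and its attached leaf $v$, and apply \Cref{thm:mainnoncover}: $\NC(\mathcal{F}) \simeq \Sigma^{|N_\mathcal{F}(w)|-1}\NC(\mathcal{F} - N_\mathcal{F}[w])$. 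When $\mathcal{F}$ has at least three internal vertices, peeling off a corner and its neighborhood should still leave a forest with at least one edge, and I expect to reach a stage where $\mathcal{F} - N_\mathcal{F}[w]$ is disconnected with two edge-containing components (roughly: a tree with $\ge 3$ internal vertices has a corner vertex $w$ whose non-leaf neighbor, when deleted along with $N[w]$, separates the remaining internal vertices into two nonempty parts — this needs the hypothesis carefully), so that $\NC$ of that forest is non–vertex-decomposable by \Cref{prop:noncovershelling}; since suspension and the homotopy equivalences of \Cref{thm:mainnoncover} preserve the relevant obstruction, I would need to check that vertex decomposability of $\NC(\mathcal{F})$ would force vertex decomposability down the chain — this is the delicate point, since \Cref{thm:mainnoncover} is only a homotopy statement, not a combinatorial one. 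I would instead track the combinatorial deletion/link structure directly: in the proof of \Cref{thm:mainnoncover}, $\del(w,\NC(\mathcal{F}))$ is literally $\del(w, \NC(\mathrm{st}_w(\mathcal{F}) \sqcup (\mathcal{F}-N_\mathcal{F}[w])))$, and being a cone point away this forces $\NC(\mathcal{F}-N_\mathcal{F}[w])$-type structure; combined with the fact that $w$ is a shedding vertex only if certain facets persist, I can run the non-shellability argument of \Cref{prop:noncovershelling} at this level.

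For the "if" direction, suppose $\mathcal{F}$ is a tree with at most two internal vertices. If $\mathcal{F}$ has zero internal vertices it is a single edge (since no isolated vertices), so $\NC(\mathcal{F})$ is a single point, trivially a simplex. If $\mathcal{F}$ has exactly one internal vertex, then $\mathcal{F} \cong G_1(m)$, a star. If $\mathcal{F}$ has exactly two internal vertices, then $\mathcal{F}$ is a double star (a caterpillar $G_2(m_1,m_2)$). In each of these cases I would exhibit a decomposing vertex of $\NC(\mathcal{F})$ explicitly. For the star $G_1(m)$ with leaves $u_1,\dots,u_m$ and center $c$: pick a leaf $u_1$; compute $\lk(u_1,\NC(G_1(m)))$ and $\del(u_1,\NC(G_1(m)))$, verify via \Cref{thm:mainnoncover} (or directly, since $\NC$ of a star is $\partial(\Delta^{\{u_1,\dots,u_m\}})$ joined with a vertex) that both are vertex decomposable and that $u_1$ is a shedding vertex — boundaries of simplices are vertex decomposable, and joins of vertex decomposable complexes are vertex decomposable by \Cref{prop:join of vd}. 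For the double star, I would pick a leaf $v$ adjacent to one of the two internal vertices $w$, and show that $\NC(\mathcal{F})$ has a decomposing vertex — a natural candidate is a leaf of $\mathcal{F}$ adjacent to an internal vertex of higher degree, chosen so that both the link and the deletion fall into the already-handled cases (link becomes a cone or a boundary-of-simplex join, deletion becomes $\NC$ of a smaller tree with at most two internal vertices plus possibly cone points). Induction on the number of vertices, using the observations about isolated-vertex cone points stated just before the theorem, should close this.

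The technical heart is verifying the shedding-vertex condition (condition (ii) of \Cref{def:vertexdecomposable}) in each explicit case and, for the "only if" direction, transferring the non-shellability obstruction through the neighborhood-deletion. I would organize the "if" direction as a short case analysis (single edge; star; double star) leaning on \Cref{prop:join of vd} and the fact that $\partial(\Delta^S)$ is vertex decomposable, and carry out one careful computation of $\lk$ and $\del$ at a well-chosen leaf in the double-star case. The main obstacle I anticipate is the "only if" direction: one must show that having $\ge 3$ internal vertices genuinely obstructs a decomposing vertex at \emph{every} vertex, not merely that the specific corner-peeling produces a disconnected residue; I would handle this by showing that for \emph{any} choice of vertex $x \in V(\mathcal{F})$, either $\lk(x,\NC(\mathcal{F}))$ or $\del(x,\NC(\mathcal{F}))$ already contains, as a "disconnected $\NC$" with two edge-rich components (or a retract/subcomplex thereof forcing the same facet-intersection defect), an obstruction of the type ruled out by \Cref{prop:noncovershelling} — using that a tree with $\ge 3$ internal vertices has the property that deleting any single vertex and its effect on $\NC$ cannot kill the internal-vertex count below the threshold that guarantees disconnection downstream.
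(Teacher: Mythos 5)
Your overall architecture (disconnected case via \Cref{prop:noncovershelling}; tree with $\ge 3$ internal vertices shown to have no decomposing vertex; explicit verification for trees with $\le 2$ internal vertices) matches the paper, but two of your concrete choices would fail as stated.

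\textbf{The ``if'' direction: wrong choice of decomposing vertex for the double star.} You propose to use a \emph{leaf} of $G_2(m,n)$ as the decomposing vertex. This leaf is never a shedding vertex. Take a leaf $u_1$ adjacent to the internal vertex $v_1$. Since $n\ge 1$ there is an edge $(v_2,w_1)$ touching neither $u_1$ nor $v_1$. Then $V(G_2(m,n))\setminus\{v_2,w_1,u_1\}$ is a facet of $\del(u_1,\NC(G_2(m,n)))$ (adding $v_2$ or $w_1$ yields a cover, since $\{w_1,u_1\}$ and $\{v_2,u_1\}$ are independent) but has cardinality $|V|-3$, hence is not a facet of $\NC(G_2(m,n))$. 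Even for $P_4=G_2(1,1)$ this already fails. The paper instead takes the \emph{internal} vertex $v_1$: one computes $\lk(v_1,\NC)=\Delta^{[m]}\ast\NC(G_1(n))$ and $\del(v_1,\NC)=\partial(\Delta^{[m+1]})\ast\Delta^{[n]}$, both vertex decomposable by \Cref{prop:join of vd}, and the shedding condition holds because every edge of $G_2(m,n)$ touches $v_1$ or $v_2$. You should replace your leaf candidate with this internal-vertex choice.

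\textbf{The ``only if'' direction: your obstruction covers only one of the three vertex types.} You correctly discard the homotopy-peeling via \Cref{thm:mainnoncover} (it does not transfer vertex decomposability), and you propose instead to show that for every vertex $x$, either $\lk(x,\NC(\F))$ or $\del(x,\NC(\F))$ exhibits a ``disconnected $\NC$'' obstruction in the sense of \Cref{prop:noncovershelling}. This works precisely for \emph{non-corner internal} vertices: there $\lk(x,\NC(\F))=\NC(\F-x)$ and $\F-x$ is a disjoint union of two graphs each with at least one edge. It does \emph{not} work for corner vertices or leaves: if $x$ is a corner or a leaf then $\F-x$ is a tree plus isolated vertices, so $\lk(x,\NC(\F))=\NC(\F-x)$ is a cone and may well be vertex decomposable; and $\del(x,\NC(\F))$ is not of the form $\NC(G)$ for any graph $G$. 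For these two cases the actual obstruction is a failure of the \emph{shedding condition}: since $\F$ has $\ge 3$ internal vertices, one can find a corner vertex $w$ with $x\notin N_\F[w]$ and a leaf $y$ adjacent to $w$, and then $V(\F)\setminus\{x,w,y\}$ is a facet of $\del(x,\NC(\F))$ (of cardinality $|V(\F)|-3$) but not a facet of $\NC(\F)$. You gesture toward a ``facet-intersection defect,'' but this shedding-condition argument needs to be made explicit; the reduction-to-disconnection you describe does not supply it.

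\end{document}
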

\begin{proof}
  Let $\F$ is connected. If $\F$ has at most one internal vertex then $\NC(\F)$ is the boundary of a simplex, hence vertex decomposable. Let $\F$ has exactly two internal vertices, {i.e.}, $\F$ is the caterpillar graph $G_2(m,n)$ for some $m,n\geq 1$. Let $v_1$ and $v_2$ are the internal vertices (cf. \Cref{fig:G221}). It is easy to see that,
  \begin{equation*}
      \begin{split}
          \lk(v_1,\NC(G_2(m,n))) & =\Delta^{[m]}\ast \NC(G_1(n)), \text{ and }\\
          \del(v_1,\NC(G_2(m,n)) & = \del \big(v_1, \NC(\text{st}_{v_1}(G_2(m,n)) \sqcup (G_2(m,n) - N_{G_2(m,n)}[v_1]))\big) \\
          & = del(v_1, \NC(G_1(m+1))\ast \Delta^{[n]}\\
          & = \partial (\Delta^{[m+1]}) \ast \Delta^{[n]}
      \end{split}
  \end{equation*}
  Thus, both $\lk(v_1,\NC(G_2(m,n)))$ and $\del(v_1,\NC(G_2(m,n)))$ are vertex decomposable from induction and \Cref{prop:join of vd}. To show that $v_1$ is a decomposing vertex, it is now enough to show that $v_1$ is a shedding vertex. Let $F=V(G_2(m,n))\sm \{a,b,v_1\}$ is a facet of $\del(v_1,\NC(G_2(m,n)))$ such that $(a,b)\in E(G_2(m,n))$. If $v_1\in \{a,b\}$ then clearly $F$ is facet of $\NC(G_2(m,n))$. Let $v_1\notin \{a,b\}$. In this case $v_2 \in \{a,b\}$ and $F\subsetneq V(\F)\sm \{v_1,v_2\}$ , which contradicts the fact that $F$ is a facet of $\del(v_1,\NC(G_2(m,n)))$. Hence, $\NC(G_2(m,n))$ is vertex decomposable.
  
  If $\F$ is not connected then $\NC(\F) $ is not vertex decomposable from \Cref{prop:noncovershelling}. Therefore, consider that $\F$ is a tree and has at least $3$ internal vertices. In this case, we show that any vertex of $\F$ is not a decomposing vertex of $\NC(\F)$. We prove this in three parts.
  \begin{enumerate}
      \item {\bf $v$ is a non-corner internal vertex}
      
      In this case, $\lk(v,\NC(\F))=\NC(\F-v)$ which is not shellable by \Cref{prop:noncovershelling} (since $\F-v$ is disjoint union of two graphs with at least one edge each) and hence not vertex decomposable. Therefore, any non-corner internal vertex is not a decomposing vertex. 
      
      \item {\bf $v$ is a corner vertex}
      
      Since $\F$ has at least $3$ internal vertices, we can choose a corner vertex $w\in V(\F)$ such that $v \notin N_{\F}[w]$. Let $x$ be a leaf vertex adjacent to $w$. Observe that, $F=V(\F)\setminus \{v,w,x\}$ is facet of $\del(v, \NC(\F))$ but $F$ is not a facet of $\NC(\F)$. Thus, $v$ is not a shedding vertex hence not a decomposing vertex.
      
      \item {\bf $v$ is a leaf vertex}
      
      The proof is similar to that of part (2). Choose vertices $w, x\in V(\F)$ such that $v \notin N_{\F}[w]$ and $x$ is a leaf vertex adjacent to $w$. Then, $V(\F)\setminus \{v,w,x\}$ is facet of $\del(v, \NC(\F))$ but not a facet of $\NC(\F)$ implying that $v$ is not a decomposing vertex.
  \end{enumerate}
  This completes the proof of \Cref{theorem:mainnoncover2}.
\end{proof}


\section{Complexes of directed trees}\label{subsec:directedtrees}

A multidigraph $G$ is a pair $(V,E)$ of finite sets $V$ and $E$ with two maps $s,t:E \longrightarrow V$. The sets $V$ and $E$ are called \emph{vertex set} and \emph{edge set} of $G$ respectively. An edge $e \in E$ will be denoted as $(s(e)\rightarrow t(e))$, here $s(e)$ is called the \emph{source} of $e$ and $t(e)$ is called the \emph{target} of $e$. If for every two distinct edges $e,e^\prime \in E$ either $s(e)\neq s(e^\prime)$ or $t(e)\neq t(e^\prime)$, then $G$ is called a \emph{digraph}. With every multigraph $G=(V,E)$, we can associate an (undirected) graph $G^{un}$ as follows: the vertex of $G^{un}$ is $V$ and two vertices $u$ and $v$ are adjacent in $G^{un}$ if and only if $(u\rightarrow v)\in E$ or $(v\rightarrow u)\in E$. A multidigraph $\F$ is called \emph{multidiforest} if its underlying graph $\F^{un}$ is a forest.

A \emph{directed cycle} of $G$ is a connected subgraph $C$ of $G$ such that each vertex of $C$ is the source of exactly one edge and target of exactly one edge. A \emph{directed forest} is a multidigraph $\F$ such that $\F$ does not contain any directed cycle and different edges of $\F$ have distinct targets.

\begin{definition}
\normalfont For a multidigraph $G=(V,E)$, the \emph{complex of directed trees} is a simplicial complex, denoted as $DT(G)$, whose simplices are the subsets $\sigma \subseteq E$
such that the induced subgraph $G[\sigma]$ is a directed forest.
\end{definition}

The study of complexes of directed trees of digraphs was initiated by Kozlov \cite{koz99}. Later, in \cite{MT08}, Marietti and Testa generalized these complexes for multidigraphs and showed that these complexes for multidiforsts are homotopy equivalent to a wedge of spheres. In \cite[Lemma 3.2]{joj13}, Joji{\'c}
 showed that these complexes are in fact vertex decomposable for those directed graphs $G$ such that $G^{un}$ is a forest. His proof was dependent on another result \cite[Theorem 1]{wood09} due to Woodroofe. Here we give an independent proof of the fact that complexes of directed trees of multidiforests are vertex decomposable. Observe that if $G^{un}$ is a forest for any directed graph $G$ then $G$ is a multidiforest. Hence, \Cref{thm:maindirectedtree} is an improvement on the previously known results.
 
 \begin{theorem}\label{thm:maindirectedtree}
 Let $\F$ be a multidiforest. Then the complex $DT(\F)$ is veretx decomposable. 
 \end{theorem}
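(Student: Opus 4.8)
The plan is to mimic the inductive strategy used in the proof of \Cref{thm:mainbdcomplex}, carrying out an induction on the number $n$ of edges of $\F$. The base case $n\le 1$ is trivial since $DT(\F)$ is then a simplex (or the void/empty complex, which we treat as a simplex). For the inductive step, I would first dispose of the easy reductions: if $\F$ has an isolated vertex $v$ then $DT(\F)=DT(\F-v)$, so we may assume $\F^{un}$ has no isolated vertices; and if $\F$ has an isolated edge $e=(s(e)\to t(e))$ then every directed forest on $E(\F)$ can freely include or exclude $e$, so $DT(\F)=C_e(DT(\F-e))$ is a cone, and \Cref{prop:join of vd} together with the inductive hypothesis finishes this case. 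More generally, if $\F$ has a connected component that is a single edge, the same coning argument applies, so we may assume every component of $\F^{un}$ has at least two edges. Then by \Cref{prop:basic properties of trees} each component of $\F^{un}$ has a corner vertex; pick one such corner vertex $w$ of $\F^{un}$.

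The heart of the argument is to choose a good shedding edge incident to $w$. The idea is to pick a leaf edge $e$ of $\F^{un}$ at $w$ — i.e.\ an edge whose other endpoint is a leaf of $\F^{un}$ — if one exists, otherwise pick an edge $e$ of $\F^{un}$ at $w$ joining $w$ to its unique internal neighbour; but then among the (multi)edges of $\F$ realizing that undirected edge, I would choose the representative $e$ based on source/target orientation, so that the ``deletion stays a facet'' condition holds. Concretely, since $w$ is a corner vertex, the undirected edges at $w$ split into leaf edges plus at most one edge to an internal vertex; I want to show that for a suitably chosen directed edge $e$ with $w\in\{s(e),t(e)\}$, every facet $\sigma$ of $\del(e,DT(\F))=DT(\F-e)$ is already a facet of $DT(\F)$, i.e.\ $\sigma\cup\{e\}$ is not a directed forest. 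This is the step I expect to be the main obstacle, because the obstruction in $DT$ is not a simple degree bound as in the bounded-degree complex, but the ``distinct targets'' (and ``no directed cycle'') condition: one must argue that any maximal directed forest on $\F-e$ already uses an edge with the same target as $e$ (forcing a target collision when $e$ is added), and here the multidigraph structure and the orientations of the edges at $w$ must be handled carefully, possibly by choosing $e$ to be an edge pointing toward $w$ whose target-conflict is unavoidable, or by a case analysis on whether the undirected edge at $w$ is realized by edges into $w$, out of $w$, or both.

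For the link, I would verify $\lk(e,DT(\F))=DT(\F-e)$ with the induced orientation (where $\F-e$ deletes the directed edge $e$ only): a set $\tau$ disjoint from $e$ satisfies $\tau\cup\{e\}\in DT(\F)$ and $\tau\in DT(\F)$ precisely when $\tau\cup\{e\}$ induces a directed forest, and since $e$ is a leaf edge of $\F^{un}$ at $w$ (or the unique internal edge, in which case a slightly finer argument is needed to see the link is itself a complex of directed trees of a smaller multidiforest, after contracting or restricting), this link is $DT$ of a multidiforest with fewer edges; the inductive hypothesis then gives vertex decomposability of the link. The deletion $\del(e,DT(\F))=DT(\F-e)$ is $DT$ of a multidiforest with one fewer edge, hence vertex decomposable by induction. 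Combining, $e$ is a decomposing vertex of $DT(\F)$, completing the induction. The one delicate point to get exactly right in writing up is the choice of $e$ and the proof of the shedding property; I would handle it by first reducing (via coning) to the case where $w$ has at least one leaf neighbour, and then picking $e$ to be a directed edge realizing a leaf undirected edge at $w$ oriented so that its target coincides with the target of another leaf edge at $w$ — whence adding $e$ back always creates a repeated target — which is possible exactly because $w$ being a corner vertex with $\deg_{\F^{un}}(w)$ large forces enough leaf edges at $w$, paralleling the inequality $|E_e|\ge\vl(w)$ in the proof of \Cref{thm:mainbdcomplex}.
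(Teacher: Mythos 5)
Your proposal has two genuine gaps, the first of which would break the induction as written. You state $\lk(e,DT(\F))=DT(\F-e)$ ``with the induced orientation,'' but that identity describes the \emph{deletion}, not the link. For a set $\tau$ disjoint from $e$ to lie in $\lk(e,DT(\F))$ one needs $\tau\cup\{e\}$ to be a directed forest, and $\tau$ being a directed forest in $\F-e$ does not guarantee this: $\tau$ could contain another edge with target $t(e)$, or a directed path from $t(e)$ back to $s(e)$, either of which is destroyed by adding $e$. The correct identification, used in the paper's proof (following Marietti--Testa), is $\lk(e,DT(G))=DT(G_{\downarrow e})$, where $G_{\downarrow e}$ is obtained by deleting every edge with target $t(e)$ and then identifying $s(e)$ with $t(e)$. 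You gesture at ``contracting or restricting'' only in the internal-edge subcase, but this contraction-and-pruning is needed for \emph{every} choice of $e$; without it the inductive hypothesis cannot be applied to the link.

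The second gap is in the shedding argument itself. Your recipe --- pick $e$ at a corner vertex ``oriented so that its target coincides with the target of another leaf edge'' --- addresses only the repeated-target obstruction and does not exist in general (in a simple digraph two distinct leaf edges at $w$ cannot share a target unless both point \emph{into} $w$, and even then the 2-cycle obstruction is missed). The paper's case split is cleaner and complete: after first using a shedding vertex to eliminate genuinely parallel edges, it fixes any leaf vertex $u$ of $\F^{un}$ with neighbour $v$ and case-analyses the orientation. When only $(v\to u)$ occurs, or when only $(u\to v)$ occurs and $v$ has no other in-edge, $DT(\F)$ is a cone; when both $(u\to v)$ and $(v\to u)$ occur, $(u\to v)$ is shedding because any facet of the deletion must contain $(v\to u)$ and re-adding $(u\to v)$ creates a 2-cycle (not a target conflict); and when only $(u\to v)$ occurs and some $(w\to v)$ with $w\ne u$ exists, $(w\to v)$ is shedding because any facet of the deletion already contains an edge into $v$. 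Your corner-vertex framing, lifted from the bounded-degree proof, does not transfer well here because the obstruction in $DT$ is not a local degree bound but the combination of distinct-target and acyclicity constraints, which the paper handles by orientation cases at a leaf rather than by counting leaf edges at a corner.
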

 
 For a multidigraph $G=(V,E)$ and $e\in E$, let $G_{\downarrow e}$ denotes the multidigraph obtained from $G$ by first removing the edges with target $t(e)$, and then identifying the vertex $s(e)$ with the vertex $t(e)$ (see \Cref{fig:contractionofedge}). The multidigraph $G_{\downarrow e}$ was introduced by Marietti and Testa in \cite{MT08}. 
 
      \begin{figure}[H]
      	\begin{subfigure}[]{0.5\textwidth}
      \centering
     \begin{tikzpicture}
\node (a) at (0,0) {$u$};
\node (c) at (0,-2) {$v$};
\node (b) at (2,0) {$w$};
\node (d) at (2,-2) {$x$};
\draw[->] (a)--(b) node[midway,above] {$e$};
\draw[->] (b)--(c) node[midway,above] {};
\draw[->] (a)--(c) node[midway,above right] {};
\draw[->] (b) edge (d) (c) edge (d);
\draw[->] (2.2,-1.8) -- (2.2,-0.2);

\end{tikzpicture}
\caption{multidigraph $G$}\label{diag:pushout1}
\end{subfigure}
\begin{subfigure}[]{0.5\textwidth}
      \centering
      \vspace{0.4cm}
      \begin{tikzpicture}
\node (c) at (0,-2) {$v$};
\node (b) at (1,0) {$w$};
\node (d) at (2,-2) {$x$};
\draw[->] (b)--(c) node[midway,above] {};
\draw[->] (0.76,-0.2)--(-0.04,-1.8) node[midway,above right] {};
\draw[->] (b) edge (d) (c) edge (d);
\end{tikzpicture}
\caption{multidigraph $G_{\downarrow e}$}\label{diag:pushout2}
\end{subfigure}
\caption{}\label{fig:contractionofedge}
\end{figure}

 Observe that,
 \begin{equation}\label{eq:multidigraphcontraction}
     \begin{split}
         \lk(e,DT(G)) & = DT(G_{\downarrow e}), \text{ and }\\
         \del(e,DT(G)) & = DT(G-e).
     \end{split}
 \end{equation}

\begin{proof}[Proof of \Cref{thm:maindirectedtree}]
  Proof is by induction on the number $n$ of edges of $\F$. The result is trivially true if $\F$ has only one edge. Let the result if true for any forest with at most $n-1$ edges and let $\F$ be a forest with $n$ edges. 
 
 Since both the multidigraphs $\F_{\downarrow e}$ and $\F -e$ have less number of edges, both the complexes $\lk(e,DT(\F))$ and $\del(e,DT(\F))$ are vertex decomposable from induction for each edge $e$ of $\F$. Therefore, to show that $DT(\F)$ is vertex decomposable, it is enough to find a shedding vertex of $DT(\F)$.
 
 If $e,f \in E(\F)$ such that $s(e)=s(f)$ and $t(e)=t(f)$ then $e$ is a shedding vertex. If not, then there exist a facet $\sigma \in \del(e,DT(\F))$ such that $\sigma \cup \{e\}\in DT(\F)$ but then $\sigma \cup \{f\}\in \del(e,DT(\F))$ which is a contradiction to the assumption that $\sigma$ is a facet of $\del(e,DT(G))$.
 
 Therefore, we can assume that $\F=(V,E)$ is a directed graph such that $\F^{un}$ is a forest. Let $u$ be a leaf vertex of $\F^{un}$ and $(u,v)\in E(\F^{un})$. Then there are following four possible cases:
 \begin{enumerate}
     \item $(v\rightarrow u) \in E$ and $(u\rightarrow v) \notin E$:
     
     In this case, observe that $DT(\F) = C_e( DT(\F-e))$ where $e=(v\rightarrow u)$. Therefore, $DT(\F)$ is vertex decomposable from induction and \Cref{prop:join of vd}. 
     
     \item $(v\rightarrow u) , (u\rightarrow v) \in E$:
     
     Let $e=(u\rightarrow v)$ and $f=(v\rightarrow u)$. Here we show that $e$ is a shedding veretx of $DT(\F)$. Let $\sigma$ be a facet of $\del(e,DT(\F))$. Since $e\notin \sigma$ and $\sigma$ is a facet, $f\in \sigma$ ( because $\sigma \cup \{f\} \in \del(e,DT(\F))$). Therefore, $\sigma \cup \{e\} \notin DT(\F)$ implying that $\sigma$ is a facet of $DT(\F)$.
     
     \item $(v\rightarrow u) \notin E$, $(u\rightarrow v) \in E$ and there is no $w (\neq u)\in V$ such that $(w\rightarrow v) \in E$:
     
     This case is similar to the case $(1)$. Here, $DT(\F) = C_f(DT(\F-f))$ with $f=(u\rightarrow v)$.
     
     \item $(v\rightarrow u) \notin E$, $(u\rightarrow v) \in E$ and there is an edge $(w\rightarrow v) \in E$ such that $w\neq u$:
     
     In this case, we show that the edge $g=(w\rightarrow v)$, where $w\neq u$, is a shedding vertex. Let $\sigma$ be a facet of $\del(g,DT(\F))$. If there exist $y \in V$ such that $y\neq w,u$ and $(y\rightarrow v) \in \sigma$ then $\sigma \cup \{g\} \notin DT(\F)$ implying that $\sigma$ is a facet of $DT(\F)$. Otherwise, $(u\rightarrow v) \in \sigma$ (because $\sigma$ is a facet of $\del(g,DT(\F))$ and $\sigma \cup \{(u\rightarrow v)\} \in \del(g,DT(\F))$) which again implies $\sigma \cup \{g\} \notin DT(\F)$.
     \end{enumerate}
     This completes the proof of \Cref{thm:maindirectedtree}.
\end{proof}

 \section*{Acknowledgements} The author would like to thank Priyavrat Deshpande for stimulating discussions and various suggestions on the early draft of this article.

\bibliographystyle{alpha}

\end{document}